\crefname{hypothesis}{Hypothesis}{Hypotheses}
\crefname{fact}{Fact}{Facts}
\title{Data-driven Model Reduction for Parameter-Dependent Matrix Equations via Operator Inference\thanks{Submitted to the editors DATE.
\funding{This work was supported in part by the National Natural Science Foundation of China (12471405, 2023YFB3001604)
and the Science and Technology Innovation Program of Hunan Province (2025RC3080).}}}
\author{Xuelian Wen\thanks{Key Laboratory of Intelligent Computing and Information Processing of Ministry of Education, Key Laboratory for Computation and Simulation in Science and Engineering, the School of Mathematics and Computational Science, Xiangtan University, Xiangtan, Hunan, China
  (\email{202421511236@smail.xtu.edu.cn}, \email{zhangjuan@xtu.edu.cn})}.
\and Qiuqi Li\thanks{the School of Mathematics, Hunan University, Changsha, Hunan, China
  (\email{qli28@hnu.edu.cn})}.
\and Juan Zhang\footnotemark[2]}
\begin{document}
\maketitle

\begin{abstract}
This work develops a non-intrusive, data-driven surrogate modeling framework based on Operator Inference (OpInf) for rapidly solving parameter-dependent matrix equations in many-query settings. Motivated by the requirements of the OpInf methodology, we reformulate the matrix equations into a structured representation that explicitly shows the parameter dependence in polynomial form. This reformulation is crucial for efficient model reduction. This approach constructs reduced-order models via regression on solution snapshots, bypassing the need for expensive full-order operators and thus overcoming the primary bottlenecks of intrusive methods in high-dimensional contexts. Numerical experiments confirm their accuracy and computational efficiency, demonstrating that our work is a scalable and practical solution for parameter-dependent matrix equations.
\end{abstract}

\begin{keywords}
Operator Inference, non-intrusive method, reduced-order modeling, parameter-dependent matrix equations
\end{keywords}

\begin{MSCcodes}
68Q25, 68R10, 68U05
\end{MSCcodes}

\section{Introduction}
Matrix equations constitute a fundamental component of systems and control theory, among which their parameter-dependent variants play an increasingly critical role in the analysis and design of complex systems subject to uncertainties or varying operating conditions. Prominent examples include the parameter-dependent algebraic Lyapunov and Riccati equations (PALEs and PAREs). PALEs are indispensable in various applications, such as the design of low-gain feedback controllers \cite{Zhou2008}, vibrational analysis \cite{Truhar2004}, parametric model reduction \cite{Son2021}, and studies of multi-agent systems \cite{Kim2011,Su2013}. Similarly, PAREs are central to problems in optimal control, state estimation, and robust control design \cite{Anderson2007,Khargonekar1990}. This paper focuses on the following class of parameter-dependent matrix equations:
\begin{equation}
\mathcal{L}_{i}(X_1,\cdots,X_s;\mu) + Q_i(\mu) = 0_{n\times n}, \quad i \in \mathcal{S} = \{1,2,\cdots,s\}, 
\label{1}
\end{equation}
where the parameter $\mu = [\mu_1, \dots, \mu_d]^\top$ belongs to a compact domain $\mathcal{P} \subset \mathbb{R}^d$, for each $i\in\mathcal{S}:$
\begin{itemize}
    \item $X_i(\mu)\in\mathbb{R}^{n\times n}$ denotes the unknown matrix to be solved;
    \item  
    $Q_i(\mu)=M_i^\top(\mu)M_i(\mu)$, with $M_i(\mu)=\sum_{j=1}^{n_{M_i}}\theta_j^{M_i}(\mu)M_{i,j}
    \in\mathbb{R}^{l\times n}$, admits an affine parameterization;
    \item $\mathcal{L}_{i}: (\mathbb{R}^{n \times n})^s \times \mathcal{P}\to \mathbb{R}^{n \times n}$ is a parameter-dependent matrix operator. 
\end{itemize}

\Cref{1} provides a general framework that encapsulates several important types of PALEs and PAREs. Specifically, the continuous-time PALEs:
$$
A^T(\mu)X(\mu)+X(\mu)A(\mu)+Q(\mu)=0_{n\times n},
$$
the discrete-time PALEs:
$$
A^T(\mu)X(\mu)A(\mu)-X(\mu)+Q(\mu)=0_{n\times n},
$$
the continuous-time coupled PALEs:
$$
A_i^\top(\mu)X_i(\mu)+X_i(\mu)A_i(\mu)+\sum_{j=1}^s\pi_{ij}X_j(\mu)+Q_i(\mu)=0_{n\times n},
$$
and the continuous-time PAREs:
$$
A^T(\mu)X(\mu)+X(\mu)A(\mu)-X(\mu)G(\mu)X(\mu)+Q(\mu)=0_{n\times n},
$$
in which the matrices $A(\mu),\ A_i(\mu),\ G(\mu)$ all conform to the affine parameterization, similar to that of $M_i(\mu)$. For the four classes of equations under consideration, we assume that standard well-posedness conditions are satisfied for all $\mu \in \mathcal{P}$. Under these assumptions, the three types of PALEs are guaranteed to admit unique solutions. In contrast, the continuous-time PAREs generally allow for multiple solutions—including indefinite ones—so we further impose the conventional requirement that ensures the existence of a unique symmetric positive semidefinite solution.

Given the central role of algebraic Lyapunov and Riccati equations, substantial effort has been devoted to developing numerical algorithms for their solutions. Numerical methods for the algebraic Lyapunov equations and the algebraic Riccati equations are well-established. The direct methods for Lyapunov equations, such as the Bartels-Stewart algorithm \cite{Bartels1972} and the Hammarling algorithm \cite{Hammarling1982}, are effective. The iterative techniques, including Krylov subspace methods \cite{Kasenally1994} and the alternating direction implicit method \cite{Lu1991,Wachspress1988}, are essential. Riccati equations are similarly addressed through both exact methods, such as Laub’s Schur decomposition \cite{Laub1979} and Benner’s generalized exact line search \cite{Benner2002}, as well as large-scale iterative methods like rational Krylov subspace projection \cite{Simoncini2016} and the matrix sign function method \cite{Byers1987}. These approaches, comprehensively surveyed in \cite{Benner2008}, form a solid computational foundation.

However, solving parametric systems introduces significant computational challenges. Traditional methods necessitate resolving the full-order equations for each parameter value, leading to prohibitive computational costs for many-query tasks such as parameter sweeps, design optimization, and uncertainty quantification. This limitation has motivated the integration of model reduction techniques with matrix equation solvers to enable efficient parametric modeling.

Model Order Reduction (MOR) aims to construct low-dimensional surrogate models that approximate the input–output behavior of high-dimensional systems. MOR techniques are broadly classified into intrusive and non-intrusive approaches. Intrusive methods require access to and manipulation of the full-order system operators. Representative methods include balanced truncation based on Lyapunov equations \cite{Gugercin2004,Moore2003}, algorithms tailored to large-scale systems \cite{Penzl2006}, extensions to bilinear and stochastic systems \cite{Benner2011}, interpolatory frameworks for parametric systems \cite{Baur2011}, as well as techniques for second-order and descriptor systems \cite{Benner2011b,Heinkenschloss2008}. Galerkin projection techniques, such as POD-Galerkin \cite{Kunisch2001}, employ Proper Orthogonal Decomposition (POD) \cite{Berkooz1993,Lumley1967,Rathinam2003,Sirovich1987} to construct optimal reduced bases from solution snapshots. The Reduced Basis (RB) method \cite{Buffa2012,Grepl2007} further enables efficient approximation for parametric problems by projecting high-dimensional systems onto problem-specific, low-dimensional subspaces. Nevertheless, intrusive methods generally require repeated evaluations of the high-fidelity model, which remain computationally demanding. To alleviate this cost, non-intrusive MOR methods have been developed. These methods  rely solely on parameter–solution pairs without explicit knowledge of system operators. Representative techniques include the Loewner framework \cite{Ionita2014} and Dynamic Mode Decomposition \cite{Schmid2010}. Among them, Operator Inference (OpInf) is a leading approach that learns reduced-order operators directly from trajectory data via regression \cite{Benner2020,McQuarrie2021,Mcquarrie2023,Peherstorfer2016,Swischuk2020}. A key strength of OpInf lies in its ability to preserve physical structure. These attributes render OpInf particularly well-suited for parametric systems where intrusive model reduction is infeasible.

Various intrusive reduction strategies have been proposed for parametric matrix equations. For instance, Kressner et al. utilized tensor compression for multi-parameter Lyapunov equations \cite{Kressner2014}; Nguyen et al. developed RB methods with error estimators for affine systems \cite{Son2017}; and Schmidt et al. applied related ideas to parametric Riccati equations \cite{Schmidt2018}. More recently, Palitta et al. introduced recycling Krylov techniques for sequences of parameterized Lyapunov equations \cite{Palitta2025}. While valuable, these intrusive frameworks still encounter challenges concerning computational overhead and the storage of high-dimensional operators.

The primary contributions of this work are threefold: firstly, we develop an efficient computational framework based on OpInf for solving parameter-dependent matrix equations that demonstrates effectiveness across both one-dimensional and two-dimensional parameter spaces. The approach offers particular advantages in many-query scenarios where rapid solution evaluation is required for multiple parameter values. Secondly, we enable the direct application of OpInf by reformulating the original parameter-dependent matrix equations to explicitly expose their underlying polynomial dependence. This reformulation is achieved through a vectorization step that reveals the required polynomial structure without necessitating computations in the resulting high-dimensional space. Finally, OpInf avoids the requirements of large-scale matrix operations and full-order operator storage, which are inherent to traditional intrusive reduction methods and become infeasible at high dimensions due to memory constraints. OpInf learns reduced operators solely from solution snapshots and the affine parameterization functions, drastically reducing computational and storage costs.

The paper is structured as follows: in \cref{sec2}, we introduce the relevant methodological framework. The procedure of the OpInf method for affine-parameterized matrix equations is  detailed in \cref{sec3}. In \cref{sec4}, we present four numerical examples to validate the effectiveness of applying OpInf method to matrix equations, in terms of both CPU time and computational accuracy. In \cref{sec5}, we summarize the core methodology and outline future research directions.

\section{The Operator Inference Framework}
\label{sec2} 
High-fidelity numerical methods (e.g., Bartels-Stewart method, Schur method, and Hamiltonian Schur method) for solving parameter-dependent  matrix equations $\cref{1}$ often involve operations with large-scale coefficient matrices and require considerable computational resources, particularly in many-query contexts such as uncertainty quantification, parameter estimation, or robust control. 
These tasks necessitate repeated solutions over the parameter space $\mathcal{P}$. Therefore, we focus on developing a framework for rapidly solving parameter-dependent matrix equations $\cref{1}$.

OpInf is a non-intrusive, data-driven model reduction technique that learns low-dimensional operators directly from snapshot data to form a reduced-order model that preserves the structure and dynamics of the original system. This approach is particularly well-suited for systems exhibiting polynomial nonlinearities, as it allows the structure of the full-order model to be preserved exactly in the reduced-order setting. The key idea is to postulate a low-dimensional model with the same polynomial form as the full-order system and infer its operators via a regression problem that minimizes the residual of the reduced-order equations over the available data. To enable the application of OpInf, $\cref{1}$ must first be reformulated into a representation that makes its polynomial structure explicit:
\begin{equation}
    C_2(\mu)x^2(\mu)+C_{1}(\mu)x(\mu) + C_{0}(\mu) = x(\mu),
    \label{polynomial structure}
\end{equation}
where 
$$
x(\mu)=\begin{bmatrix}
    \operatorname{vec}^\top(X_1\left(\mu\right)) & \cdots &\operatorname{vec}^\top(X_s\left(\mu\right))
\end{bmatrix}^\top.
$$ 
Thus, the dimension of the full-order system \cref{polynomial structure} is $N = s n^2$. Our goal is to construct an efficient surrogate model that yields $\widetilde{x}(\mu)$ approximations of $x(\mu)$ without solving the full-order model at each new parameter.
 
The reduced-order model (ROM) is constructed in a low-dimensional subspace spanned by basis vectors $\{v_j\}_{j=1}^r$, obtainable via methods such as POD, greedy algorithms, or randomized sampling with orthogonalization. The surrogate solution is then represented as
$$
\widetilde{x}(\mu)=\sum_{j=1}^r\hat{x}_{j}(\mu)v_{j},
$$    
where the coefficients  $\{\hat{x}_{j}(\mu)\}_{j=1}^r$ are determined by solving the learned reduced-order operator equation:
\begin{equation}
    \widehat{C}_2(\mu)\hat{x}^2(\mu)+\widehat{C}_1\hat{x}(\mu)+\widehat{C}_0(\mu)=\hat{x}(\mu),
    \label{reduce}
\end{equation}
which maintains the same polynomial form as the full-order system. 

The operators in $\cref{reduce}$ are inferred by solving a  regression problem that ensures consistency with the projected snapshot data. This results in a computationally efficient and accurate surrogate model that can be rapidly evaluated for new parameter values.

\section{OpInf for parameter-dependent matrix equations}
\label{sec3}
The application of the OpInf framework necessitates a reformulation of system $\cref{1}$ to explicitly reveal its polynomial structure. This is effectively achieved through vectorization, which clearly exposes the inherent polynomial relationships.
\Cref{sec3.1} details the vectorization of the parameter-dependent matrix equations into a polynomial system, establishing the mathematical foundation for operator learning. \Cref{sec3.2} then introduces reduced-order basis construction methods and describes the intrusive projection approach for building the reduced-order model. Finally, \cref{sec3.3} provides a comprehensive description of the non-intrusive OpInf method for learning reduced operators from data and establishes relevant theoretical guarantees.
\subsection{Vectorization of parameter-dependence matrix equations}
\label{sec3.1}

We begin by applying the vectorization operator to each equation in $\cref{1}$. The affine parameter dependence of the matrices composing 
$\mathcal{L}_i$ and $Q_i$ allows the linear terms to be readily expressed in vectorized form using the standard identity $\operatorname{vec}(AXB) = (B^\top \otimes A)\operatorname{vec}(X)$. Having addressed the linear part, the central challenge in this vectorization process lies in the treatment of the quadratic terms, such as the $XGX$ term appearing in algebraic Riccati equations. The following lemma outlines the vectorization procedure under the assumption that $X$ is symmetric.
\begin{lemma}[Vectorization of quadratic term ]\label{XGX}
    Let $X\in\mathbb{R}^{n\times n}$ be a symmetric matrix, and let $e_i$ denote the $i$-th column of the identity matrix $I_n$. Then, the vectorization of the quadratic term $XGX$ satisfies
    $$
    \operatorname{vec}(XGX)=\sum\limits_{i=1}^n\sum\limits_{j=1}^ng_{ij}E_{ij}x\otimes x,
    $$
\noindent where
$
G=[g_{ij}]\in\mathbb{R}^{n\times n},\ x=\operatorname{vec}(X),$ and $ E_{ij}=(\boldsymbol{e}_j^\top\otimes I_n)\otimes (\boldsymbol{e}_i^\top\otimes I_n)\in\mathbb{R}^{n^2\times n^4}.$ 
Furthermore, owing to the redundancy in $x\otimes x$, this expression can be condensed into the following simplified quadratic operator:
$$
\operatorname{vec}(XGX)=Hx^2,
$$
where
$$H\in\mathbb{R}^{n^2\times m},\ x^2= x\hat{\otimes} x = 
\begin{bmatrix}
(x^{(1)})^\top &
\cdots &
(x^{(n^2)})^\top
\end{bmatrix}^\top\in\mathbb{R}^m,
$$
with
$$
m=\frac{n^2(n^2+1)}{2},\
x^{(i)} = x_i 
\begin{bmatrix}
x_1 &
\cdots &
x_i
\end{bmatrix}^\top \in \mathbb{R}^i,\ i=1,\ldots,n^2.
$$
\end{lemma}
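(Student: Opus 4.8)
The plan is to establish the two displayed identities separately: the first by a direct computation with the standard Kronecker-product identities, and the second by exploiting the combinatorial redundancy of $x\otimes x$.

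First I would expand $G$ in the canonical basis, $G=\sum_{i=1}^{n}\sum_{j=1}^{n}g_{ij}\,\boldsymbol e_i\boldsymbol e_j^\top$, so that $XGX=\sum_{i,j}g_{ij}\,X\boldsymbol e_i\boldsymbol e_j^\top X$. Using the symmetry of $X$ to write $\boldsymbol e_j^\top X=(X\boldsymbol e_j)^\top$, this becomes a sum of rank-one terms, $XGX=\sum_{i,j}g_{ij}\,(X\boldsymbol e_i)(X\boldsymbol e_j)^\top$. Applying $\operatorname{vec}$ together with $\operatorname{vec}(\boldsymbol u\boldsymbol v^\top)=\boldsymbol v\otimes\boldsymbol u$ then gives $\operatorname{vec}(XGX)=\sum_{i,j}g_{ij}\,(X\boldsymbol e_j)\otimes(X\boldsymbol e_i)$.

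Next I would rewrite each factor via the identity $\operatorname{vec}(AXB)=(B^\top\otimes A)\operatorname{vec}(X)$ with $A=I_n$ and $B=\boldsymbol e_i$, obtaining $X\boldsymbol e_i=(\boldsymbol e_i^\top\otimes I_n)x$. Substituting and invoking the mixed-product rule $(A\otimes B)(C\otimes D)=(AC)\otimes(BD)$,
$$
(X\boldsymbol e_j)\otimes(X\boldsymbol e_i)=\bigl[(\boldsymbol e_j^\top\otimes I_n)x\bigr]\otimes\bigl[(\boldsymbol e_i^\top\otimes I_n)x\bigr]=\bigl[(\boldsymbol e_j^\top\otimes I_n)\otimes(\boldsymbol e_i^\top\otimes I_n)\bigr](x\otimes x)=E_{ij}\,(x\otimes x),
$$
and summing over $i,j$ yields the first identity; a dimension count ($\boldsymbol e_i^\top\otimes I_n\in\mathbb R^{n\times n^2}$, hence $E_{ij}\in\mathbb R^{n^2\times n^4}$) confirms consistency.

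For the condensed form, I would observe that every entry of $x\otimes x$ is a product $x_a x_b$ and therefore equals the entry $x_{\max(a,b)}x_{\min(a,b)}$ of $x\hat{\otimes}x$, which by construction collects precisely the products $x_p x_q$ with $p\ge q$, the pair $(p,q)$ appearing in block $x^{(p)}$ at position $q$. Hence there is a fixed $\{0,1\}$-matrix $D\in\mathbb R^{n^4\times m}$ with $m=n^2(n^2+1)/2$, independent of $x$, such that $x\otimes x=D\,(x\hat{\otimes}x)=D\,x^2$. Setting $H:=\bigl(\sum_{i,j}g_{ij}E_{ij}\bigr)D\in\mathbb R^{n^2\times m}$ then gives $\operatorname{vec}(XGX)=Hx^2$, as claimed. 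I expect the main obstacle to be purely bookkeeping: keeping the column-major $\operatorname{vec}$ convention consistent, tracking the order of the Kronecker factors (note that the $\boldsymbol e_j$ factor precedes the $\boldsymbol e_i$ factor in $E_{ij}$), and, for the second part, pinning down the selection matrix $D$ explicitly enough to be sure it does not depend on $X$. No step needs anything beyond the elementary Kronecker identities already used in the surrounding discussion.
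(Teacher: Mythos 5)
Your proof is correct. The paper itself does not prove this lemma -- it only remarks that the result ``is derived based on the results presented in'' Brewer (2003) and Peherstorfer--Willcox (2016) -- so your argument supplies the derivation the paper delegates to its references, and it is the standard one: expand $G=\sum_{i,j}g_{ij}\boldsymbol e_i\boldsymbol e_j^\top$, use $\operatorname{vec}(\boldsymbol u\boldsymbol v^\top)=\boldsymbol v\otimes\boldsymbol u$ and the mixed-product rule to obtain the $E_{ij}$ form, then compress $x\otimes x$ to $x\,\hat\otimes\,x$ via a fixed $\{0,1\}$ selection matrix. Two small points worth noting: you correctly isolate the one place where the symmetry of $X$ is actually needed (rewriting $\boldsymbol e_j^\top X$ as $(X\boldsymbol e_j)^\top$; without it the first Kronecker factor of $E_{ij}$ would involve $X^\top$ rather than $X$), which the lemma statement leaves implicit; and your selection matrix is named $D$, which collides with the data matrix $D$ of the least-squares problem later in the paper, so it should be renamed if the proof were inserted into the text.
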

Here, the symbol $\hat{\otimes}$ denotes the symmetric Kronecker product, which eliminates duplicate entries from $x\otimes x.$ Specifically, the vector $x^{(i)}$ is the product of the scalar $x_i$ with the vector comprising the first $i$ elements of $x$.
\begin{remark}
    The \cref{XGX} is derived based on the results presented in \cite{Brewer2003,Peherstorfer2016}.
\end{remark}

By leveraging \cref{XGX} and the standard vectorization identity for Kronecker products, the vectorized form of $\cref{1}$ can be expressed as
\begin{equation}
    P_{i}(\mu)x_i^2(\mu) + \sum_{j=1}^sR_{i,j}(\mu)x_j(\mu) + N_i(\mu) = 0_{n^2},\quad i=1,\cdots,s,
    \label{2.1}
\end{equation}
where $x_i(\mu)=\operatorname{vec}(X_i(\mu))$, and $x_i^2(\mu)$ is similar to $x^2$ in \cref{XGX}. In this formulation, for the $i$-th equation:
\begin{itemize}
    \item $P_i(\mu)\in\mathbb{R}^{n^2\times m}$ denotes the coefficients of the quadratic terms;
    \item $R_{i,j}(\mu)\in\mathbb{R}^{n^2\times n^2}$ denotes the coefficients linking the linear term $x_j(\mu)$ to the $i$-th equation;
    \item $N_i(\mu) = \operatorname{vec}(Q_i(\mu))\in\mathbb{R}^{n^2}$ denotes a parameterized term dependent on $\mu$.
\end{itemize}

Therefore, the vectorized system $\cref{2.1}$ can be assembled into the following block-structured equation:
\begin{equation}
    C_2(\mu)x^2(\mu)+\overline{C}_{1}(\mu)x(\mu) + C_{0}(\mu) = 0_{N},
    \label{3.2}
\end{equation}
where the block matrices and vectors are defined as
$$
C_2(\mu)=
\begin{bmatrix}
    P_{1}(\mu)\\
    & \ddots\\
    & & P_{s}(\mu)
\end{bmatrix},\ 
\overline{C}_1(\mu)=
\begin{bmatrix}
    R_{1,1}(\mu) & \cdots & R_{1,s}(\mu)\\
    \vdots & \ddots & \vdots\\
    R_{s,1}(\mu) & \cdots & R_{s,s}(\mu)
\end{bmatrix},
$$
$$
C_0(\mu)=
\begin{bmatrix}
    N_1(\mu)\\
    \vdots\\
    N_s(\mu)
\end{bmatrix},\
x^2(\mu)=
\begin{bmatrix}
    x_1^2(\mu)\\
    \vdots\\
    x_s^2(\mu)
\end{bmatrix},\ 
x(\mu)=
\begin{bmatrix}
    x_1(\mu)\\
    \vdots\\
    x_s(\mu)
\end{bmatrix}.
$$

Unlike parameter-dependent partial differential equations, for which the time derivative provides a natural target for OpInf, the algebraic systems considered here lack an explicitly evolving quantity. Consequently, we designate the state vector $x(\mu)$ itself as the inference target, reformulating the problem to fit data-driven learning frameworks. This is achieved by rewriting $\cref{3.2}$ into the polynomial structure $\cref{polynomial structure}$. Note that in this reformulation, the operator $C_1(\mu)$ is given by $C_1(\mu)=\overline{C}_1(\mu)+I_{N}.$ The affine parameter dependence of the original matrix operators consequently implies that the operators in \cref{polynomial structure} also admit an affine decomposition:
$$
C_2(\mu)=\sum\limits_{i=1}^{n_{C_2}}\theta_i^{C_2}(\mu) C_{2,i},\ 
C_1(\mu)=\sum\limits_{i=1}^{n_{C_1}}\theta_i^{C_1}(\mu) C_{1,i},\
C_0(\mu)=\sum\limits_{i=1}^{n_{C_0}}\theta_i^{C_0}(\mu) C_{0,i}.    $$
\subsection{Reduced-order basis construction}
\label{sec3.2}
This section outlines the construction of a reduced-order basis. Several methods exist for this purpose. POD processes all available solution snapshots via singular value decomposition to derive a globally optimal orthogonal basis, ensuring the most compact and accurate representation of the training data. A greedy selection strategy can be applied directly to the existing snapshot set to iteratively choose a representative subset, which can be computationally efficient for very high-dimensional problems. Similarly, randomized sampling offers another efficient pathway for basis generation. However, both of these approaches yield suboptimal bases compared to the provable optimality of POD. Therefore, POD remains the preferred choice due to its robustness and superior accuracy, providing the most reliable foundation for the subsequent operator regression.

We now describe this POD construction in detail. Consider the known solution vectors $\{x(\mu_i)\}_{i=1}^k$ of the system $\cref{polynomial structure}$ corresponding to parameters \(\{\mu_i\}_{i=1}^k\). These are assembled into the snapshot matrix 
$X=\begin{bmatrix}
    x(\mu_1) & \cdots & x(\mu_k)
\end{bmatrix}\in \mathbb{R}^{N \times k}$, whose singular value decomposition (SVD) is given by
$
X = \sum_{i=1}^k \sigma_i u_i w_i^\top,
$
with singular values \(\sigma_1 \geq \sigma_2 \geq \cdots \geq \sigma_k\geq 0\). The optimal rank-$r$ POD basis \(V_r\in\mathbb{R}^{N\times r}\) is obtained by solving 
$$
V_r=\mathop{\arg\min}_{\substack{\widetilde{V}^\top\widetilde{V}=I_r\\}}\Vert X-\widetilde{V}\widetilde{V}^\top X\Vert_F^2.
$$

According to the Eckart-Young-Mirsky theorem \cite{eckart1936, golub2013}, \(V_r\) is formed by the first \(r\) left singular vectors \(\{u_i\}_{i=1}^r\), and the associated reconstruction error is
\begin{equation}
\Vert X - V_r V_r^\top X \Vert_F^2=\sum_{i=r+1}^k \sigma_i^2,
\label{2.4}
\end{equation}
which shows that the accuracy of the POD basis is governed by the decay of the singular values. The following theorem provides an energy-based criterion for selecting the mode count $r$.
\begin{theorem}[Energy-based POD truncation] \label{thm:pod}
Let \(X \in \mathbb{R}^{N \times k}\) be a snapshot matrix with the singular value decomposition \(X = \sum_{i=1}^k\sigma_iu_iw_i^\top\), ane let \(\epsilon \in [0,1)\)  be a energy tolerance.  If $r$ is the smallest integer such that
\[
\frac{\sum_{i=1}^r \sigma_i^2}{\sum_{i=1}^k \sigma_i^2} 
\geq 1 - \epsilon,
\]
then the relative error of the rank-$r$ POD basis $V_r$ satisfies $$\frac{\Vert X - V_r V_r^\top X \Vert_F^2}{\Vert X \Vert_F^2}\leq \epsilon.$$
\end{theorem}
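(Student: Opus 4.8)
The plan is to reduce the statement to two elementary facts about singular values and then perform a one-line algebraic rearrangement. First I would record that the squared Frobenius norm of $X$ equals the sum of its squared singular values, $\Vert X\Vert_F^2 = \sum_{i=1}^k \sigma_i^2$; this follows immediately from the SVD $X = \sum_{i=1}^k \sigma_i u_i w_i^\top$ together with the orthonormality of the families $\{u_i\}$ and $\{w_i\}$. Second, I would invoke the Eckart--Young--Mirsky identity already displayed in \cref{2.4}, namely $\Vert X - V_r V_r^\top X\Vert_F^2 = \sum_{i=r+1}^k \sigma_i^2$, which is exactly the tail of the very same sum.

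With these two facts in hand, the argument is purely arithmetic. The hypothesis $\frac{\sum_{i=1}^r \sigma_i^2}{\sum_{i=1}^k \sigma_i^2} \geq 1-\epsilon$ is equivalent, after multiplying through by the positive denominator $\sum_{i=1}^k \sigma_i^2 = \Vert X\Vert_F^2$, to the inequality $\sum_{i=1}^r \sigma_i^2 \geq (1-\epsilon)\sum_{i=1}^k \sigma_i^2$. Subtracting this from $\sum_{i=1}^k \sigma_i^2$ yields $\sum_{i=r+1}^k \sigma_i^2 \leq \epsilon \sum_{i=1}^k \sigma_i^2$. Dividing by $\Vert X\Vert_F^2$ and applying the two facts above turns the left-hand side into $\Vert X - V_r V_r^\top X\Vert_F^2 / \Vert X\Vert_F^2$ and the right-hand side into $\epsilon$, which is precisely the asserted bound.

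The only items requiring a word of care concern well-posedness of the statement rather than its proof: one should assume $X \neq 0$ so that $\Vert X\Vert_F^2 > 0$ and the ratio in the hypothesis is defined, and one should observe that an integer $r$ satisfying the energy condition always exists, since the ratio equals $1 \geq 1-\epsilon$ at $r=k$; hence the ``smallest such $r$'' is well defined and lies in $\{1,\dots,k\}$, and the presence of trailing zero singular values is harmless. I do not anticipate any genuine obstacle here: essentially all of the mathematical content is carried by \cref{2.4}, and the theorem is a convenient repackaging of that identity into a tolerance-driven criterion for choosing the truncation rank $r$ in practice.
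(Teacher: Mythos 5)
Your proposal is correct and follows exactly the route the paper takes: the paper's proof is the one-line remark that the result follows from \cref{2.4}, and your argument simply spells out that derivation (Frobenius norm as the sum of squared singular values, the Eckart--Young--Mirsky tail identity, and the arithmetic rearrangement of the energy condition). The additional well-posedness remarks about $X \neq 0$ and the existence of a smallest valid $r$ are sensible but not part of the paper's argument.
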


\begin{proof}
The result follows directly from \cref{2.4}.
\end{proof}
\begin{remark}
    While the POD provides a globally optimal basis, the computational cost of SVD becomes prohibitive for very high-dimensional systems. When the target accuracy permits, greedy selection from snapshot data is efficient and alternative.
\end{remark}

Given a basis matrix $V_r=[v]_{ij}$, an
intrusive POD-Galerkin projection of \cref{polynomial structure} yields a reduced system as described in \cref{reduce} with the following operators:
\begin{gather}
    \widehat{C}_0(\mu)=V_r^\top C_0(\mu)=\sum\limits_{i=1}^{n_{C_0}}\theta_i^{C_0}(\mu) \widehat{C}_{0,i}\in\mathbb{R}^r,
    \label{C0}\\
    \widehat{C}_1(\mu)=V_r^\top C_1(\mu)V_r=\sum\limits_{i=1}^{n_{C_1}}\theta_i^{C_1}(\mu) \widehat{C}_{1,i}\in\mathbb{R}^{r\times r},\label{C1}\\
    \widehat{C}_2(\mu)=\sum\limits_{i=1}^{n_{C_2}}\theta_i^{C_2}(\mu) \widehat{C}_{2,i}\in\mathbb{R}^{r\times q(r)},\label{C2}
\end{gather}
where $ q(r) = \frac{r(r+1)}{2} $, and the specific construction methods of $\widehat{C}_2(\mu)$ follow \cite{Mcquarrie2023,Peherstorfer2016}.

However, such intrusive projection requires explicit access to the  high-dimensional operators, which is often computationally expensive or even infeasible. To overcome this limitation, we employ OpInf to learn the reduced operators $\widehat{C}_{0,i}$, $\widehat{C}_{1,i}$, $\widehat{C}_{2,i}$ directly from the projected data ${\hat{x}(\mu_i)}$ and the parameter functions $\theta_i^{C_0}(\mu),\ \theta_i^{C_1}(\mu),\ \theta_i^{C_2}(\mu)$, bypassing the expensive operator construction. This non-intrusive approach is detailed in the next subsection.
\subsection{Non-intrusive Operator Inference}
\label{sec3.3}
This section details the OpInf for learning a ROM from the projected data and the parameter functions. In contrast to the intrusive projection method, OpInf determines the reduced operators by solving a data-driven regression problem, avoiding explicit dependence on the high-dimensional operators.

Consider the reduced-order system $\cref{reduce}$ with known affine parameter functions $\theta=\{\theta_j^{C_2},\theta_j^{C_2},\theta_j^{C_0}\}$. We define $\cref{reduce}$ as
\begin{equation}
    \hat{x}(\mu)=F(\widehat{O};\hat{x},\theta,\mu )=\widehat{C}_2(\mu)\hat{x}^2(\mu)+\widehat{C}_1(\mu)\hat{x}(\mu)+\widehat{C}_0(\mu),
    \label{F}
\end{equation}
where the unknown operators are consolidated into the matrix:
\begin{equation}
    \widehat{O}=
	\begin{bmatrix}
		\widehat{C}_{2,1} & 
		\cdots & 
		\widehat{C}_{2,n_{C_2}} & 
		\widehat{C}_{1,1} & 
		\cdots & 
		\widehat{C}_{1,n_{C_1}} &
        \widehat{C}_{0,1} & 
		\cdots & 
		\widehat{C}_{0,n_{C_0}}
	\end{bmatrix}^\top\in\mathbb{R}^{p(r)\times r},
    \label{O}
\end{equation}
with $p(r)=q(r)\cdot n_{C_2}+r\cdot n_{C_1}+n_{C_0}.$ 

For each parameter sample $\mu_j\in\{\mu_i\}_{i=1}^k$, with full-order solution $x_j=x(\mu_j)$, the corresponding reduced-state vectors are given by
\[
\hat{x}_j = V_r^Tx_j,\quad \hat{x}_j^2=\hat{x}_j\hat{\otimes}\hat{x}_j, \quad j=1, \cdots, k,
\]
which are assembled into the data matrices:
\[
\widehat{X} = 
\begin{bmatrix}
    \hat{x}_1 &
    \cdots & 
    \hat{x}_k
\end{bmatrix}^\top \in\mathbb{R}^{k\times r},\
\widehat{X}^2 = 
\begin{bmatrix}
    \hat{x}^2_1 &
    \cdots &
    \hat{x}^2_k
\end{bmatrix}^\top\in \mathbb{R}^{k \times q(r)}.
\]
The parameter vectors for each sample are defined as
$$
\theta^{C_2}(\mu_i)=
	\begin{bmatrix}
		\theta_1^{C_2}(\mu_i) & \cdots & \theta_{n_{C_2}}^{C_2}(\mu_i)
	\end{bmatrix}\in\mathbb{R}^{1\times n_{C_2}},
$$
$$
\theta^{C_1}(\mu_i)=
	\begin{bmatrix}
		\theta_1^{C_1}(\mu_i) & \cdots & \theta_{n_{C_1}}^{C_1}(\mu_i)
	\end{bmatrix}\in\mathbb{R}^{1\times n_{C_1}},
$$
$$
\theta^{C_0}(\mu_i)=
	\begin{bmatrix}
		\theta_1^{C_0}(\mu_i) & \cdots & \theta_{n_{C_0}}^{C_0}(\mu_i)
	\end{bmatrix}\in\mathbb{R}^{1\times n_{C_0}}.
$$

The reduced operator $\widehat{O}$ defined in $\cref{O}$ is obtained by solving the following least-squares problem:
\begin{equation}
\min_{\hat{O} \in \mathbb{R}^{p(r) \times r}} \| D \widehat{O} - \widehat{X} \|_{F}^{2},
\label{2.6}
\end{equation}
where the matrix $D$ is constructed as
$$
	D=
	\begin{bmatrix}
		D_{C_2}\mid D_{C_1} \mid D_{C_0}
	\end{bmatrix}
	=
	\begin{bmatrix}
		\theta^{C_2}(\mu_1)\otimes (\hat{x}_1^2)^\top & \theta^{C_1}(\mu_1)\otimes (\hat{x}_1)^\top & \theta^{C_0}(\mu_1)\\
		\vdots & \vdots & \vdots\\
		\theta^{C_2}(\mu_k)\otimes (\hat{x}_k^2)^\top & \theta^{C_1}(\mu_k)\otimes (\hat{x}_k)^\top & \theta^{C_0}(\mu_k)
	\end{bmatrix}.
$$

If the matrix D has full column rank, then $\eqref{2.6}$ has a unique solution. The following theorem establishes conditions for this property.
\begin{theorem} \label{theorem3.5}
Define the parameter matrices \(\Theta^{C_2} \in \mathbb{R}^{k \times n_{C_2}}\), \(\Theta^{C_1} \in \mathbb{R}^{k \times n_{C_1}}\), \(\Theta^{C_0} \in \mathbb{R}^{k \times n_{C_0}}\) with entries $[\Theta^{C_2}]_{ij} = \theta_j^{C_2}(\mu_i)$, and similarly for $\Theta^{C_1},\ \Theta^{C_0}.$ Then:
\begin{enumerate}
    \item (Necessary conditions) If $D$ has full column rank, then each of the matrices $\Theta^{C_{2}}$, $\Theta^{C_{1}}$, $\Theta^{C_{0}}$, $\widehat{X}^{2}$, and $\widehat{X}$ must have full column rank.

    \item (Sufficient condition) If for every nonzero $M_{2} \in \mathbb{R}^{q(r) \times n_{C_{2}}}$, $M_{1} \in \mathbb{R}^{r \times n_{C_{1}}}$, and $\boldsymbol{v}_{C_{0}} \in \mathbb{R}^{n_{C_{0}}}$, there exists at least one sample $\mu_{i}$ such that
    \begin{equation}
        (\hat{x}_i^2)^{\top}M_2\theta^{C_2}(\mu_i)^{\top}+(\hat{x}_i)^{\top}M_1\theta^{C_1}(\mu_i)^{\top}+\theta^{C_0}(\mu_i)v_{C_0}\neq0,
        \label{Sufficient condition}
    \end{equation}
    then $D$ has full column rank.
\end{enumerate}
\end{theorem}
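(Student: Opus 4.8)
The plan is to reduce both implications to a single explicit description of $\ker D$. Partition an arbitrary vector $z\in\mathbb{R}^{p(r)}$ conformally with the three block-columns $D_{C_2}\mid D_{C_1}\mid D_{C_0}$ of $D$, writing $z=[\operatorname{vec}(M_2)^\top,\ \operatorname{vec}(M_1)^\top,\ v_{C_0}^\top]^\top$ with $M_2\in\mathbb{R}^{q(r)\times n_{C_2}}$, $M_1\in\mathbb{R}^{r\times n_{C_1}}$, $v_{C_0}\in\mathbb{R}^{n_{C_0}}$, using the same block ordering that defines $\widehat{O}$ in \cref{O}. Applying the identity $\operatorname{vec}(AXB)=(B^\top\otimes A)\operatorname{vec}(X)$ to each row of $D$ — with $A=(\hat{x}_i^2)^\top$, $X=M_2$, $B=\theta^{C_2}(\mu_i)^\top$ for the $D_{C_2}$ block, and analogously $A=(\hat{x}_i)^\top$, $X=M_1$, $B=\theta^{C_1}(\mu_i)^\top$ for the $D_{C_1}$ block — the $i$-th entry of $Dz$ is exactly
\[
(\hat{x}_i^2)^{\top}M_2\theta^{C_2}(\mu_i)^{\top}+(\hat{x}_i)^{\top}M_1\theta^{C_1}(\mu_i)^{\top}+\theta^{C_0}(\mu_i)v_{C_0},
\]
i.e.\ the left-hand side of \cref{Sufficient condition}. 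Thus $z\in\ker D$ if and only if this scalar vanishes at every sample $\mu_1,\dots,\mu_k$.

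With this characterization, the sufficient condition is immediate: the hypothesis states precisely that the displayed scalar vanishes at all $\mu_i$ only for the zero triple $(M_2,M_1,v_{C_0})$, which under the above bijection means $\ker D=\{0\}$, i.e.\ $D$ has full column rank.

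For the necessary conditions I would argue by contraposition, exhibiting a nonzero rank-one element of $\ker D$ whenever one of the five factor matrices is column-rank deficient. If $\Theta^{C_0}$ is deficient, take $0\neq v_{C_0}$ with $\Theta^{C_0}v_{C_0}=0$ and $M_2=M_1=0$; the scalar collapses to $\theta^{C_0}(\mu_i)v_{C_0}$, the $i$-th entry of $\Theta^{C_0}v_{C_0}=0$. If $\widehat{X}$ is deficient, take $0\neq u\in\mathbb{R}^r$ with $(\hat{x}_i)^\top u=0$ for all $i$, fix any $0\neq w\in\mathbb{R}^{n_{C_1}}$, set $M_1=uw^\top$ and $M_2=0$, $v_{C_0}=0$; then $(\hat{x}_i)^\top M_1\theta^{C_1}(\mu_i)^\top=\big((\hat{x}_i)^\top u\big)\big(w^\top\theta^{C_1}(\mu_i)^\top\big)=0$, while $M_1\neq 0$. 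The cases of $\Theta^{C_1}$ (use $M_1=uw^\top$ with $w$ in the kernel of $\Theta^{C_1}$, $u$ arbitrary nonzero), $\Theta^{C_2}$, and $\widehat{X}^2$ (use $M_2=uw^\top$ with the appropriate factor in the respective kernel) follow the identical rank-one recipe. Each construction yields a nonzero $z$ annihilated by $D$, contradicting full column rank.

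The only genuinely delicate step is the first one: lining up the Kronecker-product rows of $D$ with the $\operatorname{vec}$ identity under a consistent reshaping/ordering convention, and making sure the partition of $z$ matches that of $\widehat{O}$ in \cref{O}. After that the two directions are bookkeeping. A minor point to state explicitly is that the rank-one companions used in the converse direction require $q(r),r,n_{C_2},n_{C_1}\ge 1$, which holds trivially, so a suitable nonzero pairing vector always exists.
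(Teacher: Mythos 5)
Your proof is correct. For the sufficiency claim you take essentially the same route as the paper: both arguments hinge on reshaping a candidate kernel vector $z$ into the triple $(M_2,M_1,v_{C_0})$ and applying the identity $\operatorname{vec}(AXB)=(B^\top\otimes A)\operatorname{vec}(X)$ row by row to identify the $i$-th entry of $Dz$ with the scalar in \cref{Sufficient condition}; the paper phrases this as a contrapositive while you state it as a kernel characterization, which is the same argument. The one substantive difference is in the necessary conditions: the paper simply cites McQuarrie et al.\ for that part, whereas you supply a self-contained proof via explicit rank-one kernel elements ($M_1=uw^\top$ with one factor drawn from the kernel of the deficient matrix, and similarly for $M_2$ and $v_{C_0}$). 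Your constructions are correct — each produces a nonzero $z$ with $Dz=0$, and your closing remark that the nonzero companion factor exists because $q(r),r,n_{C_1},n_{C_2}\ge 1$ is the right detail to pin down — so your version is strictly more complete than what appears in the paper, at the cost of a few extra lines.
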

\begin{proof}
    The necessary conditions follow from ~\cite{Mcquarrie2023}. To prove sufficiency, we proceed by proving its contrapositive. Assume $D$ does not have full column rank. Then there exists a nonzero vector $v = \left[v_{C_2}^\top, v_{C_1}^\top, v_{C_0}^\top\right]^\top \in \mathbb{R}^{p(r)}$ such that $Dv = 0_k$. Reshape $v_{C_2}$ into $M_2 \in \mathbb{R}^{q(r) \times n_{C_2}}$ and $v_{C_1}$ into $M_1 \in \mathbb{R}^{r \times n_{C_1}}$. Using Kronecker product properties, this implies for all samples $\mu_i$:
    $$
    (\hat{x}_i^2)^\top M_2 \theta^{C_2}(\mu_i)^\top+(\hat{x}_i)^\top M_1 \theta^{C_1}(\mu_i)^\top+\theta^{C_0}(\mu_i)v_{C_0} = 0,
    $$
    violating the sufficient condition. Therefore, $D$ has full column rank.
\end{proof}

ROM learned through OpInf and those constructed via intrusive projection, i.e., by explicitly evaluating $\cref{reduce}$, are related in the following theorem, which establishes conditions for the existence and uniqueness of the reduced-order operators.
\begin{theorem}[Existence and Uniqueness of the Reduced-Order Operators]\label{Existence and Uniqueness}
Consider the affine-parametrized matrix equation $\cref{1}$ and its vector form $\cref{polynomial structure}$. Suppose there exists a reduced-order subspace $\mathcal{M} \subset \mathbb{R}^{sn^2}$ spanned by an orthonormal basis $\{v_j\}_{j=1}^r$, and let $\{\mu_j\}_{j=1}^k \subset \mathcal{P}$ be a finite set of parameter samples. Define the loss function 
$$\mathcal{L}(\widehat{O}) = \sum_{i=1}^k\Vert F(\widehat{O};\hat{x},\theta,\mu_i)-\hat{x}(\mu_i)\Vert_2^2,$$ 
where $F$ is given by $\cref{F}$. Assume:
\begin{enumerate}
    \item[(C1)] For all parameter samples $\mu_i \in \{\mu_j\}_{j=1}^k$, the full-order solution $x(\mu_j)$ satisfies $x(\mu_j) = V_r\hat{x}(\mu_j)$, where $V_r = [v_1, \ldots, v_r]$. That is, the snapshot reconstruction is exact at the sample parameters. 
    \item[(C2)] The matrix $D \in \mathbb{R}^{k \times p(r)}\ (k>p(r))$ has full column rank.
\end{enumerate}
If condition (C1) holds, then the loss function $\mathcal{L}$ has a global minimizer $\widehat{O}$ satisfying $\mathcal{L}(\widehat{O}) = 0$. If condition (C2) also holds, this global minimizer is unique.
\end{theorem}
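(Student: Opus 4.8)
The plan is to reduce the theorem to two essentially independent statements: first, that condition (C1) guarantees the existence of an operator tuple achieving zero loss, and second, that the full-rank condition (C2) upgrades this to uniqueness. I would begin by unpacking the loss function. Using the data matrices $\widehat{X}$, $\widehat{X}^2$, the parameter vectors, and the assembled matrix $D$ from \cref{sec3.3}, the key observation is that $\mathcal{L}(\widehat{O})$ is exactly $\|D\widehat{O} - \widehat{X}\|_F^2$, the residual of the least-squares problem \cref{2.6}. This is a purely algebraic rewriting: the $i$-th summand $\|F(\widehat{O};\hat{x},\theta,\mu_i) - \hat{x}(\mu_i)\|_2^2$ equals the squared norm of the $i$-th row of $D\widehat{O} - \widehat{X}$, because the affine decompositions of $\widehat{C}_2(\mu)$, $\widehat{C}_1(\mu)$, $\widehat{C}_0(\mu)$ together with the Kronecker-product identities turn the evaluation $F(\widehat{O};\hat{x},\theta,\mu_i)$ into the product of the $i$-th block row of $D$ with $\widehat{O}$. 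I would state this identification as the first step and verify it by matching the block structure of $D = [D_{C_2}\mid D_{C_1}\mid D_{C_0}]$ against the block structure of $\widehat{O}$.

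For existence under (C1), the idea is that exact snapshot reconstruction means the reduced coordinates $\hat{x}(\mu_j) = V_r^\top x(\mu_j)$ inherit, via Galerkin projection, an exact reduced equation. Concretely, projecting the full-order relation \cref{polynomial structure} onto $\mathcal{M}$ and using $x(\mu_j) = V_r\hat{x}(\mu_j)$ together with the compatibility of the symmetric Kronecker product with the basis change (so that $V_r^\top C_2(\mu) (V_r\hat{x})^2$ can be written as $\widehat{C}_2(\mu)\hat{x}^2$ for a suitable $\widehat{C}_2(\mu)$ built from the intrusive operators \cref{C0}--\cref{C2}) shows that the intrusively projected operator tuple $\widehat{O}^{\mathrm{intr}}$ satisfies $F(\widehat{O}^{\mathrm{intr}};\hat{x},\theta,\mu_j) = \hat{x}(\mu_j)$ for every sample. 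Hence $\mathcal{L}(\widehat{O}^{\mathrm{intr}}) = 0$, and since $\mathcal{L}\geq 0$ always, $\widehat{O}^{\mathrm{intr}}$ is a global minimizer achieving value $0$. Equivalently, in the matrix form, $\widehat{X}$ lies in the column space of $D$, so the linear system $D\widehat{O} = \widehat{X}$ is consistent.

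For uniqueness under (C2), I would argue that the minimizers of $\|D\widehat{O} - \widehat{X}\|_F^2$ form an affine subspace parallel to $\ker D$ acting columnwise; when $D$ has full column rank, $\ker D = \{0\}$, so the minimizer is unique (this is the standard normal-equations argument: $D^\top D$ is invertible, giving $\widehat{O} = (D^\top D)^{-1}D^\top \widehat{X}$, which under (C1) also coincides with the exact solution of the consistent system). I would note that $k > p(r)$ is what makes full column rank possible and that \cref{theorem3.5} supplies checkable conditions for it, so the hypothesis is not vacuous.

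The main obstacle I anticipate is the first step — the clean algebraic identification of $\mathcal{L}(\widehat{O})$ with $\|D\widehat{O}-\widehat{X}\|_F^2$ — and in particular verifying that the symmetric Kronecker product interacts correctly with the Galerkin projection, i.e., that $\hat{x}_j^2 = \hat{x}_j\,\hat{\otimes}\,\hat{x}_j$ (the reduced quadratic data) is genuinely the object that pairs with $\widehat{C}_2(\mu)$ in the reduced equation, rather than merely the compressed version of $(V_r\hat{x}_j)\otimes(V_r\hat{x}_j)$. Getting the bookkeeping of dimensions $q(r) = r(r+1)/2$ versus $m = n^2(n^2+1)/2$ consistent between the full and reduced levels, and confirming that the construction of $\widehat{C}_2$ in \cref{C2} following \cite{Mcquarrie2023,Peherstorfer2016} is exactly the one making the projected equation hold at the samples, is the delicate part; once that is in place, both existence and uniqueness are routine linear algebra.
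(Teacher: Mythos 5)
Your proposal is correct and follows essentially the same route as the paper: existence is obtained by exhibiting the intrusively projected operators \cref{C0}--\cref{C2} as a zero-loss minimizer under (C1), and uniqueness follows from the linearity of $F$ in $\widehat{O}$, which identifies the loss with $\Vert D\widehat{O}-\widehat{X}\Vert_F^2$ and reduces the question to the injectivity of $D$ under (C2). Your explicit attention to the compatibility of the symmetric Kronecker product with the Galerkin projection is a useful elaboration of a step the paper leaves implicit by citing the construction of $\widehat{C}_2$, but it does not change the argument.
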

\begin{proof}
    Assume (C1) holds. Then for each $\mu\in \{\mu_j\}_{j=1}^k$, equation $\eqref{reduce}$ is satisfied exactly with operators $\widehat{C}_{2,j},\ \widehat{C}_{1,j},$ and $ \widehat{C}_{0,j}$ derived from \cref{C0,C1,C2}. 
    Constructing $\widehat{O}$ from these operators as $\eqref{O}$, we have $\hat{x}(\mu)=F(\widehat{O}; \hat{x},\theta,\mu)$
    for all $\mu\in \{\mu_j\}_{j=1}^k.$ Thus, we have $\mathcal{L}(\widehat{O})=0.$ Since $\mathcal{L}$ is non-negative, $\widehat{O}$ is a global minimizer.

    To show uniqueness, suppose $\widehat{O}_1,\ \widehat{O}_2$ are both minimizers with $\mathcal{L}(\widehat{O}_1)=\mathcal{L}(\widehat{O}_2)=0,$ which implies
    \begin{equation}
        F(\widetilde{O};\hat{x},\theta,\mu_i)=F(\widehat{O}_1;\hat{x},\theta,\mu_i)-F(\widehat{O}_2;\hat{x},\theta,\mu_i)=0_r,\ \ i=1,\cdots ,k,
        \label{unique}
    \end{equation}
    where 
    $$
    \widetilde{O}=\widehat{O}_1-\widehat{O}_2=
    \begin{bmatrix}
		\widetilde{C}_{2,1} & 
		\cdots & 
		\widetilde{C}_{2,n_{C_2}} & 
		\widetilde{C}_{1,1} & 
		\cdots & 
		\widetilde{C}_{1,n_{C_1}} &
        \widetilde{C}_{0,1} & 
		\cdots & 
		\widetilde{C}_{0,n_{C_0}}
	\end{bmatrix}^\top.
    $$
    Moreover, $\eqref{unique}$ can be written as
    $$
    D\widetilde{O}=0_{k\times r}.
    $$
    Since $D$ is of full column rank, we have $\widetilde{O}=0_{p(r)\times r}$; consequently, $\widehat{O}_1=\widehat{O}_2$.
\end{proof}

The sufficient conditions in \Cref{theorem3.5} can be difficult to verify in practice. Moreover, deriving practically verifiable sufficient conditions to guide the optimal selection of snapshot parameters $\mu_j$ remains an open and challenging problem. To ensure a stable and unique solution, especially when $D$ is potentially rank-deficient, we introduce a regularization framework. The Tikhonov-regularized problem can be expressed as
\begin{equation}
\min_{\widehat{O} \in \mathbb{R}^{p(r) \times r}} \| D \widehat{O} - \widehat{X} \|^2_F + \|\Lambda \widehat{O} \|^2_F,\ 
\label{regularized}
\nonumber
\end{equation}
where $\Lambda \in \mathbb{R}^{p(r) \times p(r)}$ is a regularization matrix. 
The solution satisfies the modified normal equations:
\begin{equation}
(D^\top D + \Lambda^\top \Lambda) \widehat{O} = D^\top \widehat{X}.
\label{2.8}
\end{equation}

A practical choice for $\Lambda$ is a diagonal matrix \cite{Mcquarrie2023} such that
$$
\|\Lambda \widehat{O} \|^2_F =  \lambda_1^2 \left( \sum_{i=1}^{n_{C_1}} \|\widehat{C}_{1,i} \|^2_F + \sum_{i=1}^{n_{C_0}} \|\widehat{C}_{0,i} \|^2_F \right)+\lambda_2^2 \left( \sum_{i=1}^{n_{C_2}} \|\widehat{C}_{2,i} \|^2_F \right).
$$
The hyper-parameters are selected to minimize the mean squared training error: 
$$
\frac{1}{k} \sum_{i=1}^{k} \|\hat{x}(\mu_i) - \bar{x}(\mu_i)\|^2_2,
$$
where \(\hat{x}(\mu_i)\) is the known training data, and \(\bar{x}(\mu_i)\) is the solution of the ROM $\cref{reduce}$ for hyper-parameters $\lambda_1,\ \lambda_2$.

In the offline phase, the basis $V_r$ is constructed, and the operators $\widehat{O}$ are inferred by solving $\cref{regularized}$. For any $w<r,$ a $w$-dimensional ROM is obtained by restricting to the first $w$ POD modes.
In the online phase, the parametric ROM is solved rapidly to obtain $\hat{x}(\mu)$ for new parameter $\mu$, and the full-state approximation is recovered via $x(\mu)\approx
V_r\hat{x}(\mu).$ 
Thus, the surrogate model of $\eqref{polynomial structure}$ is 
\begin{equation}
    \begin{cases}
        \widehat{C}_2(\mu)\hat{x}^2(\mu)+\widehat{C}_1(\mu)\hat{x}(\mu)+\widehat{C}_0(\mu)=\hat{x}(\mu),\\
        \widetilde{x}(\mu)=V_r\hat{x}(\mu),
    \end{cases}
    \label{surrogate model}
    \nonumber
\end{equation}
where $\widetilde{x}(\mu)$ is approximate solution of $x(\mu).$
\begin{remark}
    Reduced operators for a smaller dimension $w<r$  can be constructed by truncation: $\widehat{C}_0(\mu)$ retains the first $w$ rows; $\widehat{C}_1(\mu)$ retains the first $w$ rows and columns; $\widehat{C}_2(\mu)$ retains the first $w$ rows and first $w(w+1)/2$ columns.
\end{remark}
\section{Numerical experiments}
\label{sec4}
In this section, we present several numerical examples to demonstrate the performance of the OpInf method. In \cref{sec4.1}, the method is applied to continuous-time PALEs. It achieves good accuracy and high computational efficiency, particularly when solutions for a large number of new parameter values are required, outperforming conventional numerical approaches in terms of speed. In \cref{sec4.2}, discrete-time PALEs is considered, further demonstrating the capability of OpInf in handling two-dimensional parametric problems. In \cref{sec4.3}, we investigate continuous-time coupled PALEs. The results show that OpInf delivers accuracy comparable to that of intrusive projection-based reduced-order methods, confirming its effectiveness for coupled systems. Finally, \cref{sec4.4} presents continuous-time PAREs, highlighting the robustness and applicability of the OpInf method for nonlinear equations.

Let $k$ parameters be distributed over $\mathcal{P}$. Denote by $x(\mu_i)$ and $\hat{x}(\mu_i)$ the full-order and reduced-order solutions corresponding to the parameter $\mu_i$, respectively. The average relative error over these $k$ parameters is defined as
$$
er_{\text{avg}} = \frac{1}{k} \sum_{i=1}^{k} \frac{\| x(\mu_i) - V_r\hat{x}(\mu_i) \|_2}{\| x(\mu_i) \|_2}.
$$
\subsection{Continuous-time PALEs}
\label{sec4.1}
The parameter domain is defined as $\mathcal{P} = [0.1, 2]$. We consider the case of $s=1$, in which the linear operator 
$\mathcal{L}_1$ is defined as 
$$
\mathcal{L}_1(X;\mu)=A^\top(\mu)X(\mu)+X(\mu)A(\mu).
$$
In this setting, equation $\eqref{1}$ reduces to a continuous-time PALE,
where
$$A(\mu)=\frac{1}{\mu}A_1+\frac{1}{\mu^2}A_2,\ M(\mu)=\frac{1}{\mu}M_1+M_2,
$$
$$
M_1=
\begin{bmatrix}
0.2 & 0 & \cdots & 0 & 0.2
\end{bmatrix},\
M_2=
\begin{bmatrix}
0 & 0.2 & \cdots & 0.2 & 0
\end{bmatrix},
$$
$$
A_1=
\begin{bmatrix}
5 & 0.3 & 0.3\\
& 5 & 0.3 & 0.3\\
& & \ddots & \ddots & \ddots\\
& & & 5 & 0.3 & 0.3\\
& & & & 5 & 0.3\\
& & & & & 5\\
\end{bmatrix},\ A_2=
\begin{bmatrix}
0\\
0.2 & 0\\
0.2 & 0.2 & 0\\
&  \ddots & \ddots & \ddots\\
& & 0.2 & 0.2 & 0\\
& & & 0.2 & 0.2 & 0\\
\end{bmatrix}.
$$

To evaluate the performance of the ROM, we set $r = 8$ and analyze the results from various angles.
\Cref{figure1} compares the relative errors of different basis selection strategies and examines the effect of the number of training parameters on POD accuracy. The left subplot clearly demonstrates the superior accuracy of the POD-based approach, while the right subplot shows how the model error decreases as more training parameters are incorporated, indicating favorable convergence behavior.
\begin{figure}
    \centering
    \begin{subfigure}{0.43\textwidth} 
        \includegraphics[width=\linewidth]{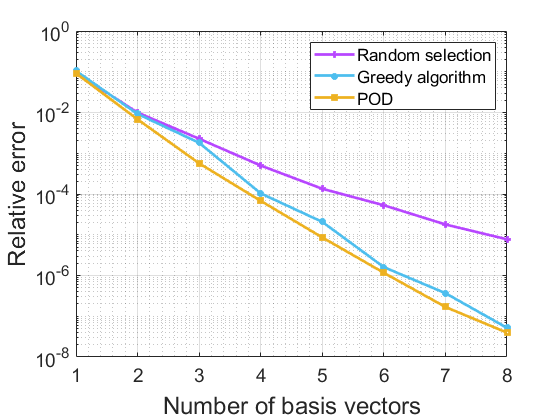}
        \label{basis method}
    \end{subfigure}
    \hspace{-0.5cm}
    \begin{subfigure}{0.43\textwidth} 
        \includegraphics[width=\linewidth]{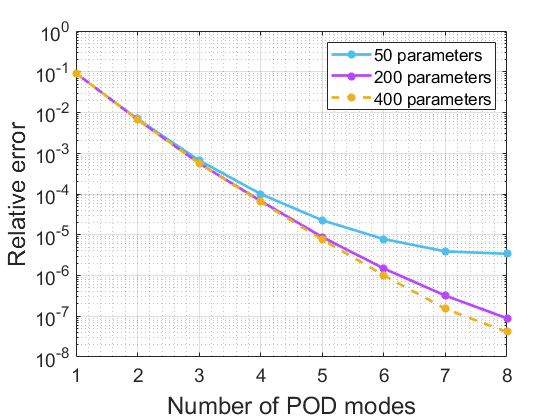}
        \label{number of parameters}
    \end{subfigure}
    \caption{Relative errors for different basis selection methods (left) and for the POD method with varying size of training parameters (right), evaluated at the full-order system dimension of $N=1024^2$ over $10^4$ uniformly distributed random test parameters.}
    \label{figure1}
\end{figure}

\begin{figure}
    \centering
    \begin{subfigure}[b]{0.32\textwidth}
        \includegraphics[width=\textwidth]{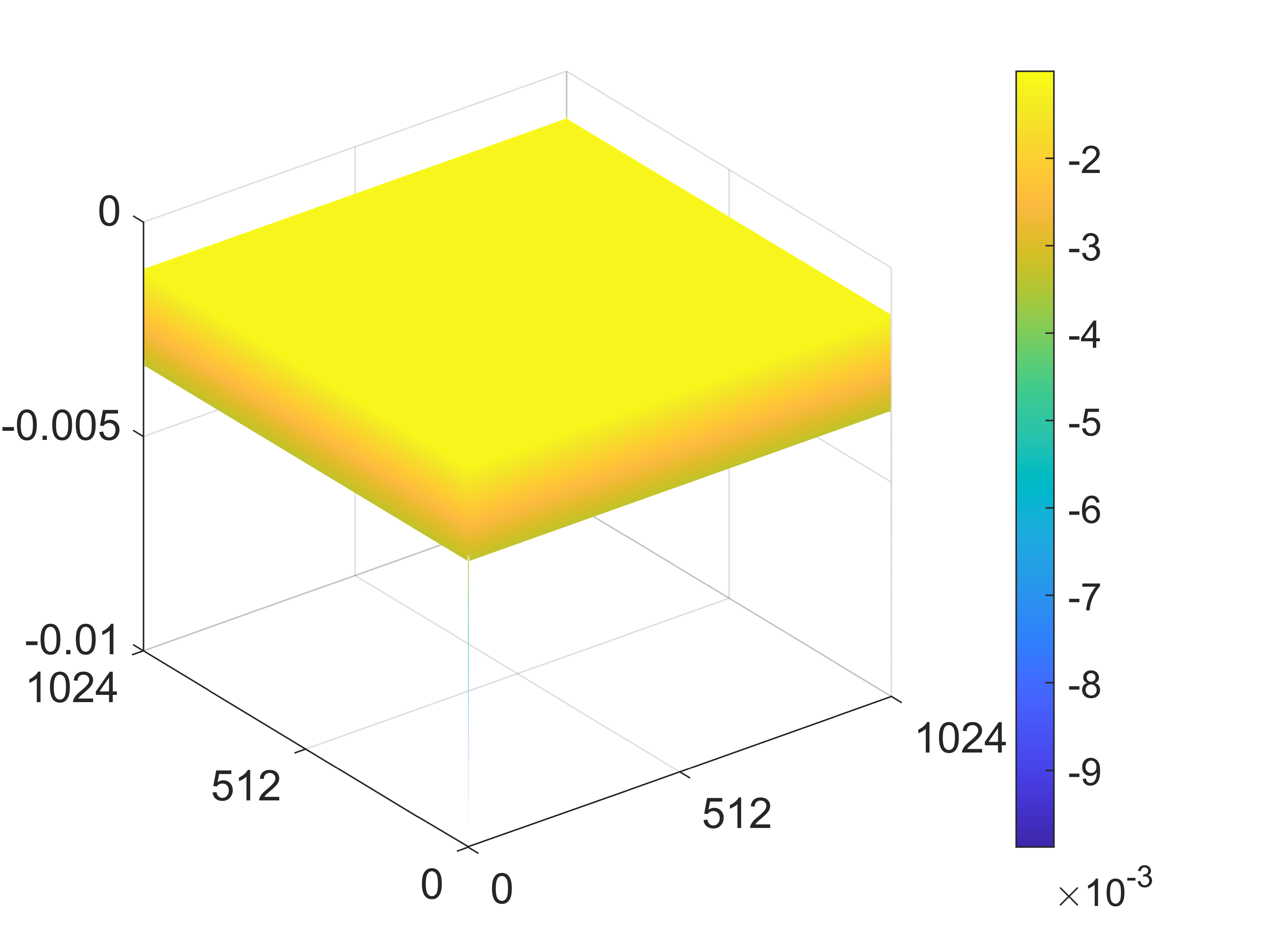}
    \end{subfigure}
    \hspace{-0.5cm}
    \begin{subfigure}[b]{0.32\textwidth}
        \includegraphics[width=\textwidth]{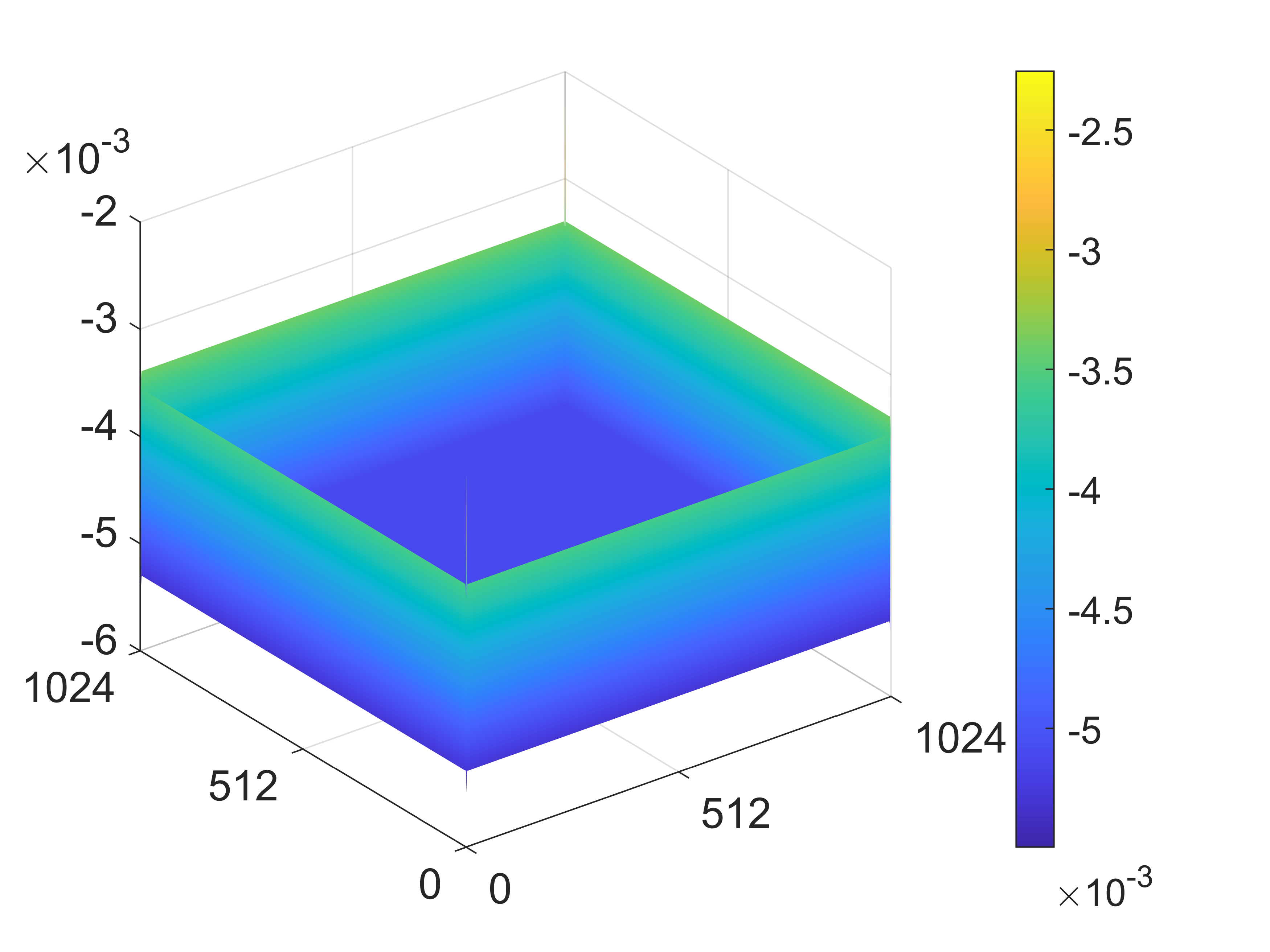}
    \end{subfigure}
    \hspace{-0.5cm}
    \begin{subfigure}[b]{0.32\textwidth}
        \includegraphics[width=\textwidth]{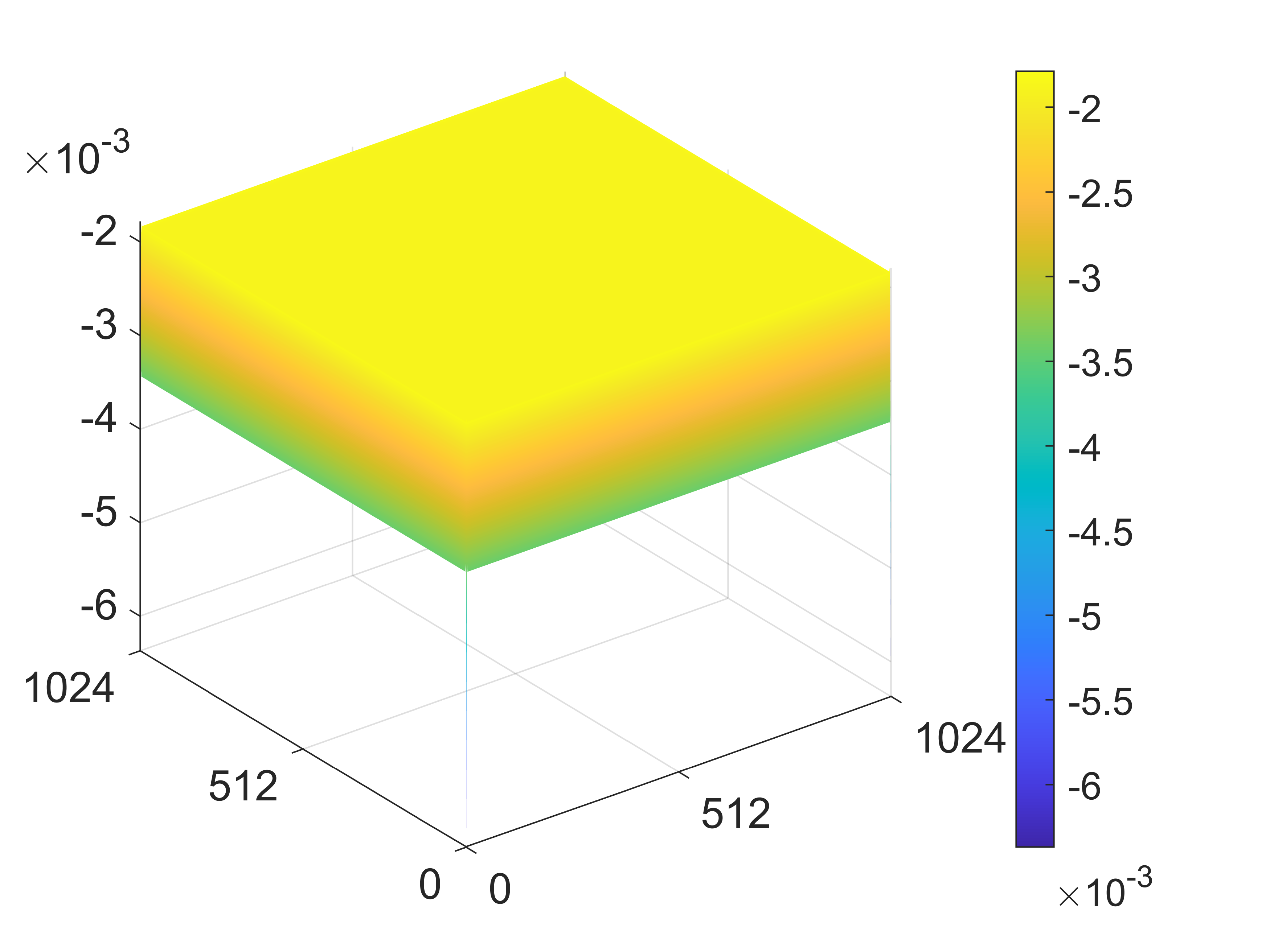}
    \end{subfigure}
     \vspace{-0.1cm} 
    \begin{subfigure}[b]{0.32\textwidth}
        \includegraphics[width=\textwidth]{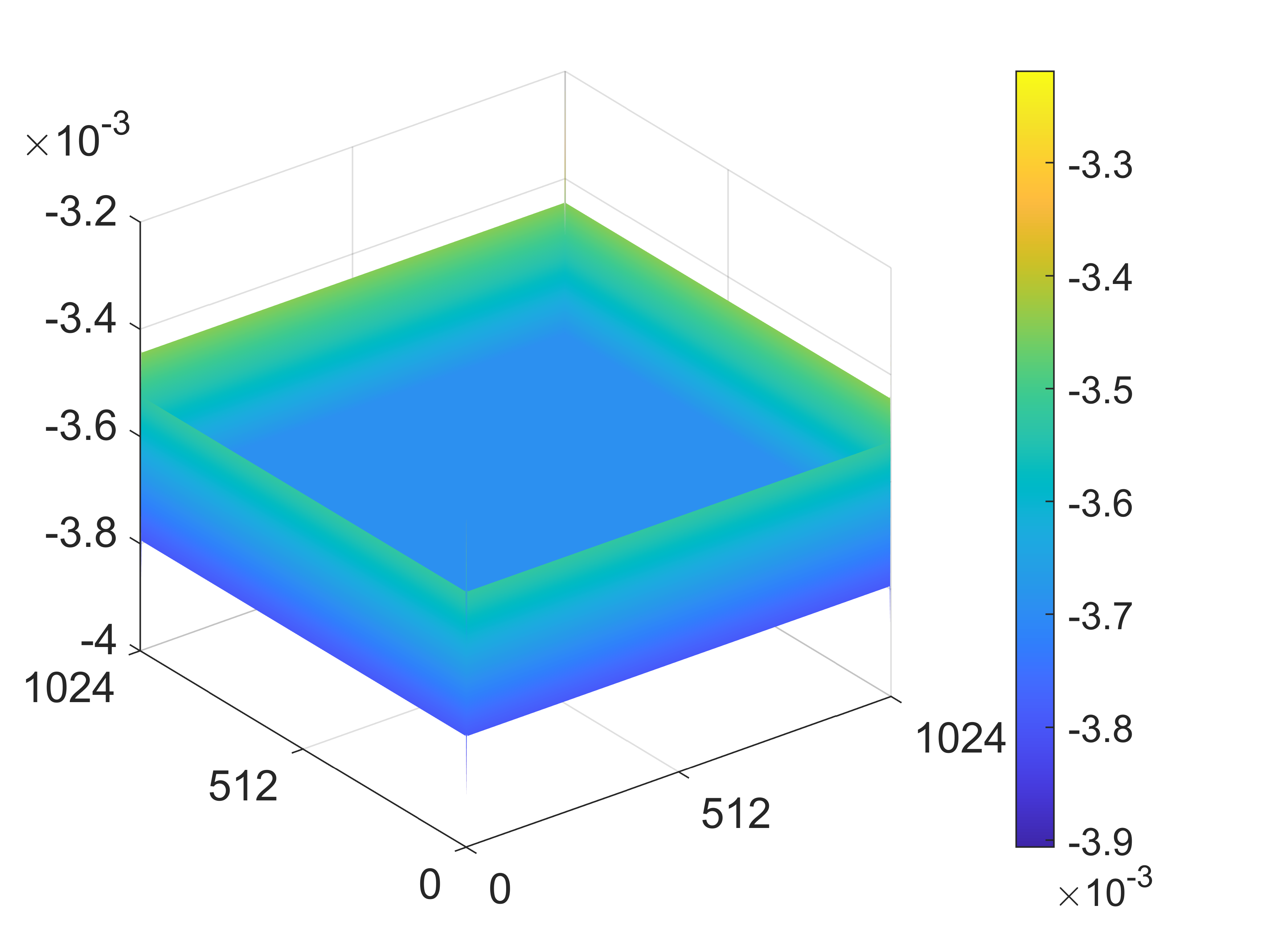}
    \end{subfigure}
    \hspace{-0.5cm}
    \begin{subfigure}[b]{0.32\textwidth}
        \includegraphics[width=\textwidth]{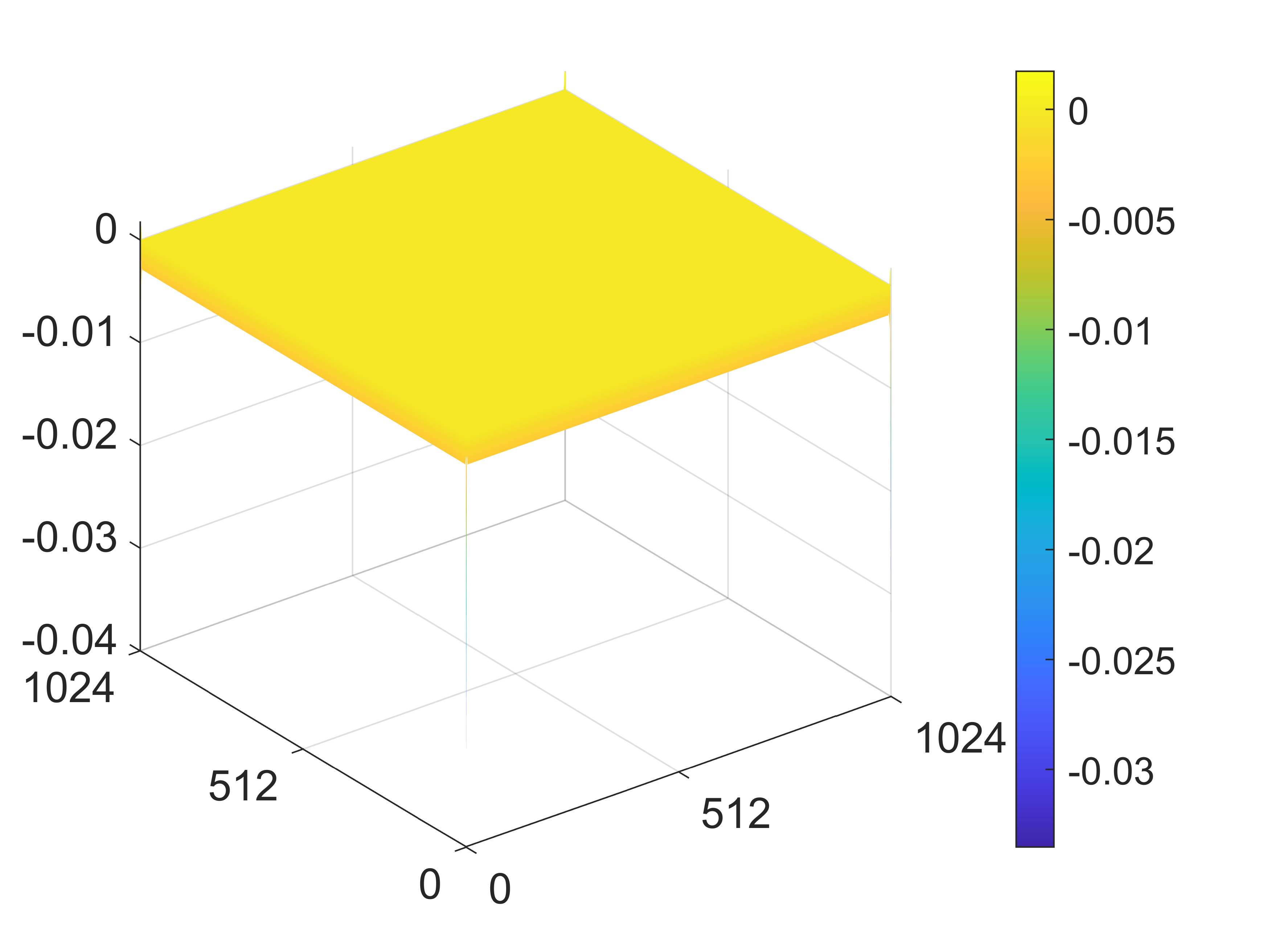}
    \end{subfigure}
    \hspace{-0.5cm}
    \begin{subfigure}[b]{0.32\textwidth}
        \includegraphics[width=\textwidth]{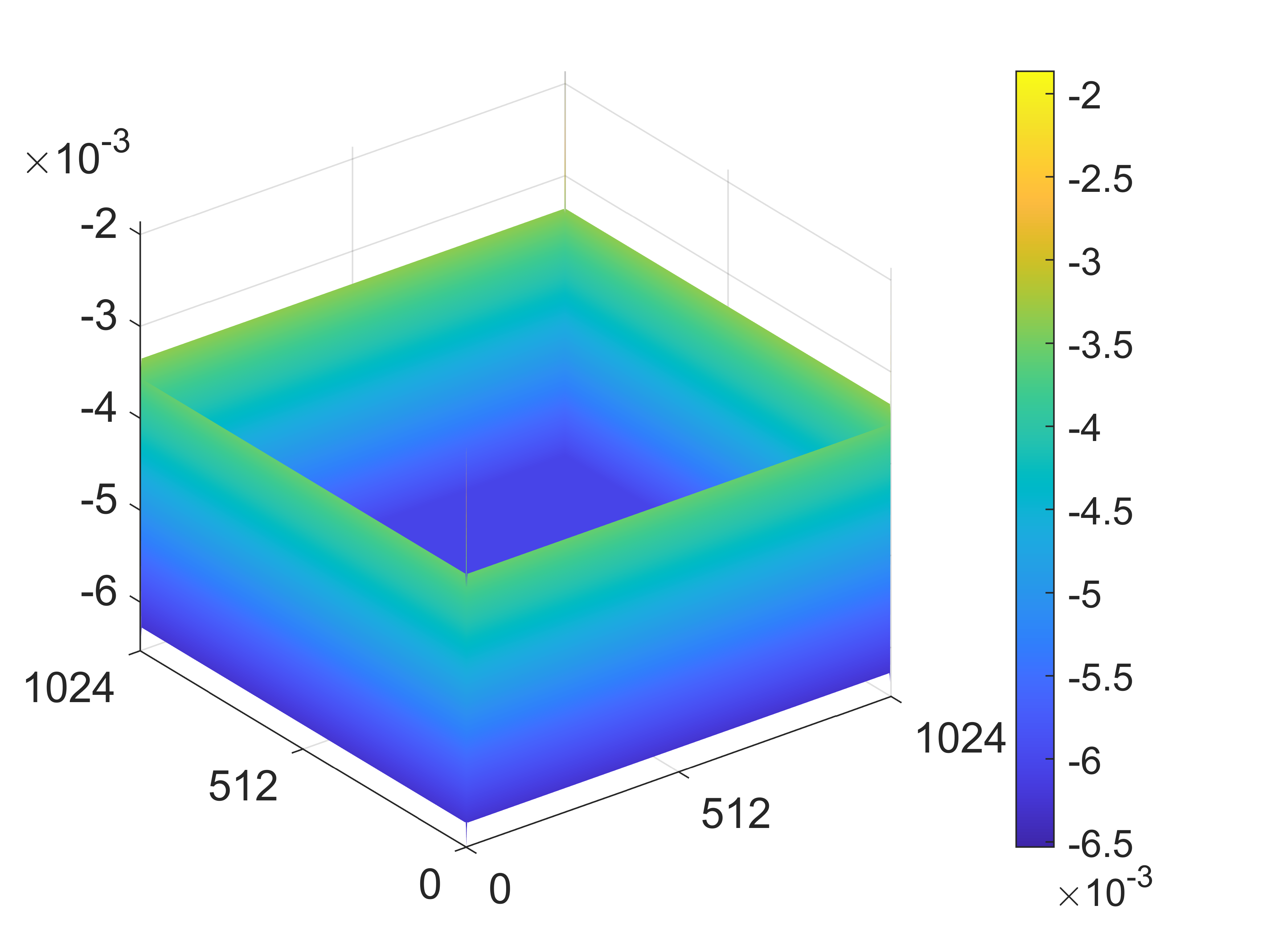}
    \end{subfigure}
    \caption{The snapshots, evaluated at the full-order system dimension of $N=1024^2$.}
    \label{snapshot}
\end{figure}
\Cref{tab:comparison1} reports the average relative errors for different sizes of POD modes and provides a quantitative comparison of computational efficiency between the OpInf method and the MATLAB lyap solver. We record the CPU time(offline ($T_{\text{off}}$), online ($T_{\text{on}}$), total ($T_{\text{tot}}$), and average online ($T_{\text{avg}}$) time) required to compute solutions for $10^4$ uniformly distributed test parameters. The results show that OpInf achieves a speed-up of over three orders of magnitude in average solution time while maintaining accuracy. \Cref{snapshot} displays six solution snapshots, each corresponding to a specific  parameter sample, illustrating the influence of parameter variations on the system  behavior. Furthermore, \Cref{POD mode} presents the first 6 POD modes, providing visual  insight into the dominant structures captured by the reduced basis.
Finally, \Cref{figure2} and \Cref{figure3}  illustrate the performance of models trained with 200 and 50 parameters, respectively, both tested on the same set of 1000 random parameters. Interestingly, the model trained with fewer but well-chosen parameters achieves a lower average relative error, suggesting that simply increasing the number of training samples does not guaranty better accuracy. Instead, these results highlight the importance of selecting representative training parameters that effectively capture essential features of the parameter space.
\begin{figure}[htbp]
    \centering
    \begin{subfigure}[b]{0.32\textwidth}
        \includegraphics[width=\textwidth]{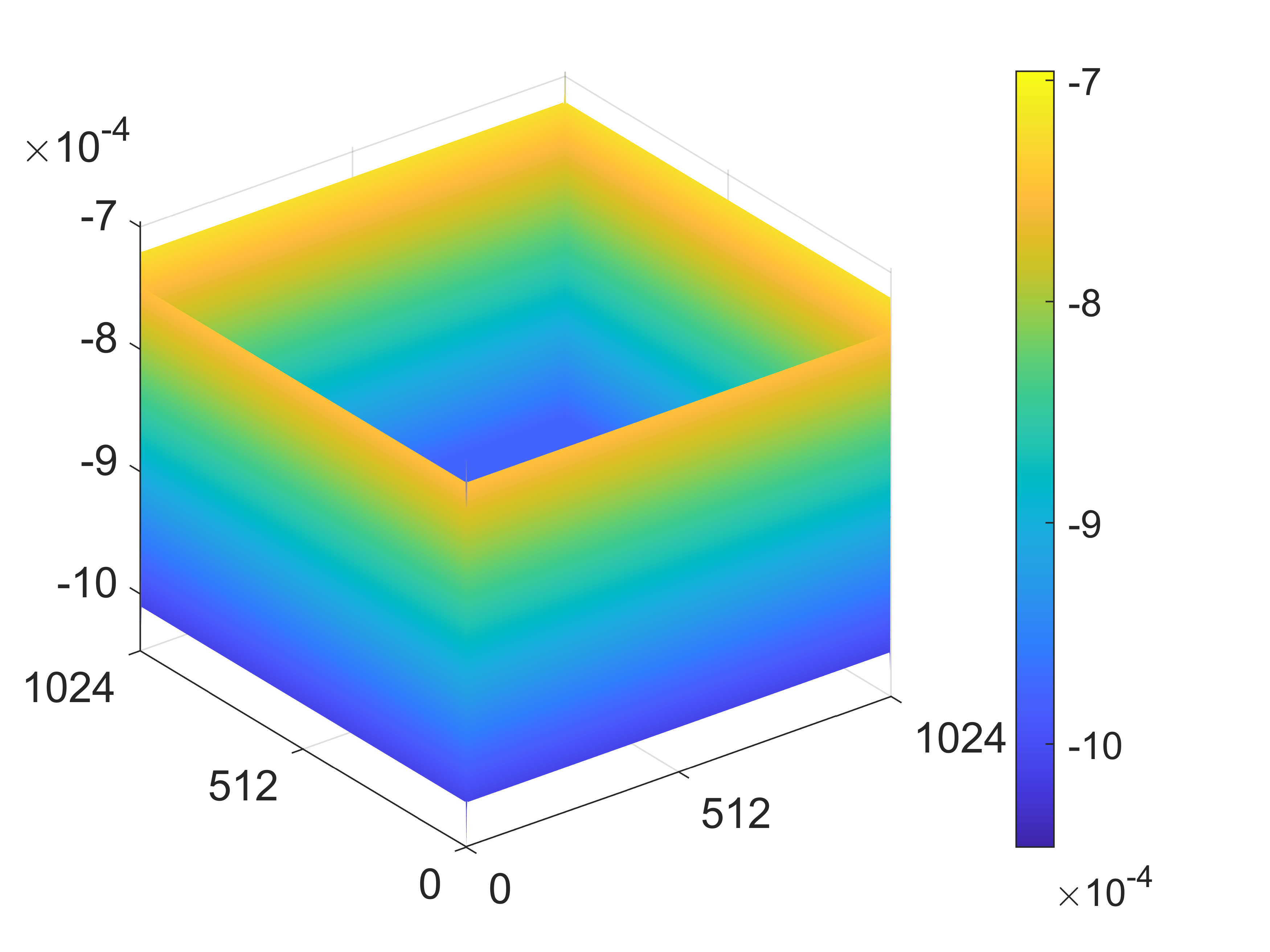}
    \end{subfigure}
    \hspace{-0.5cm}
    \begin{subfigure}[b]{0.32\textwidth}
        \includegraphics[width=\textwidth]{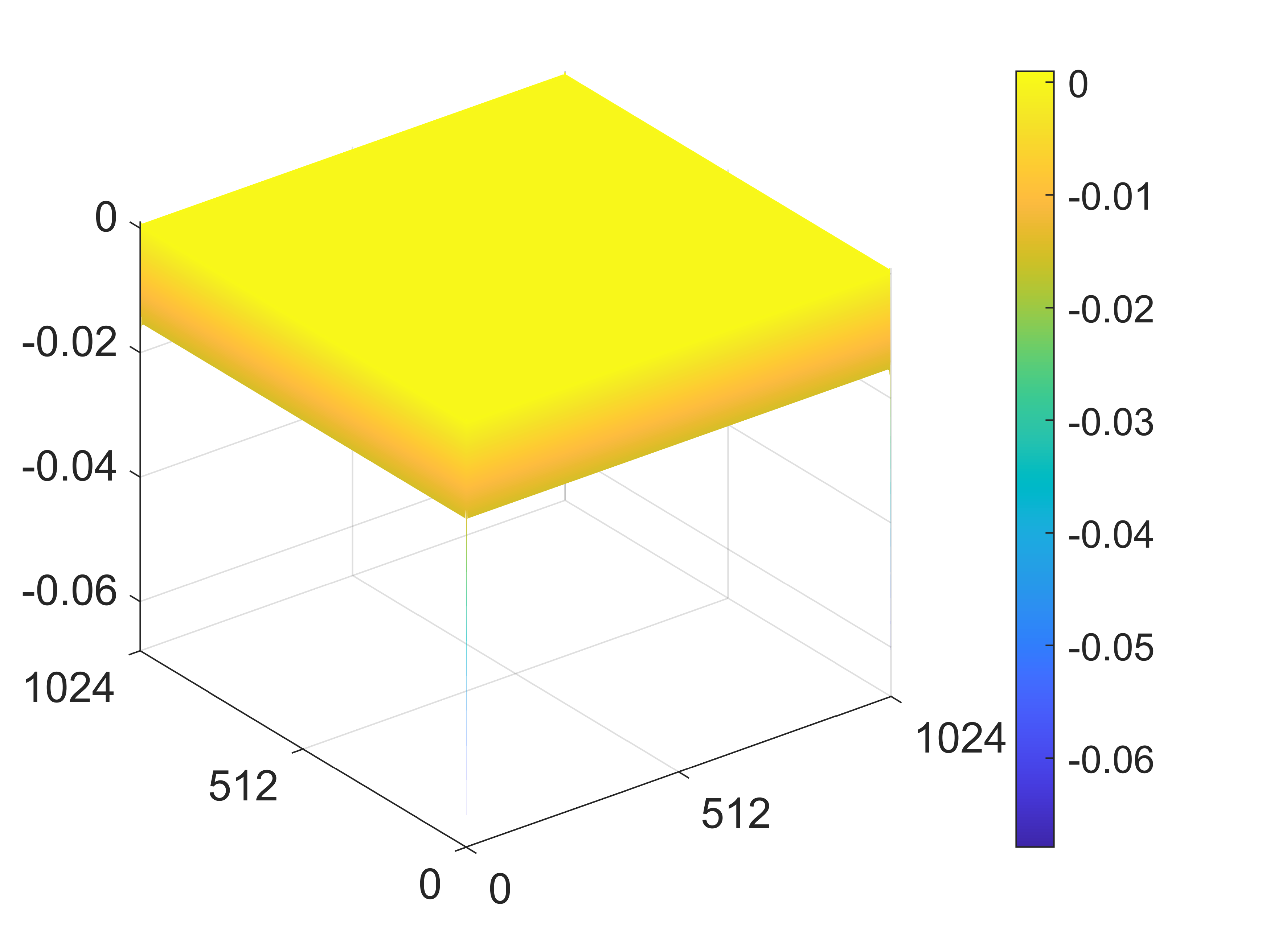}
    \end{subfigure}
    \hspace{-0.5cm}
    \begin{subfigure}[b]{0.32\textwidth}
        \includegraphics[width=\textwidth]{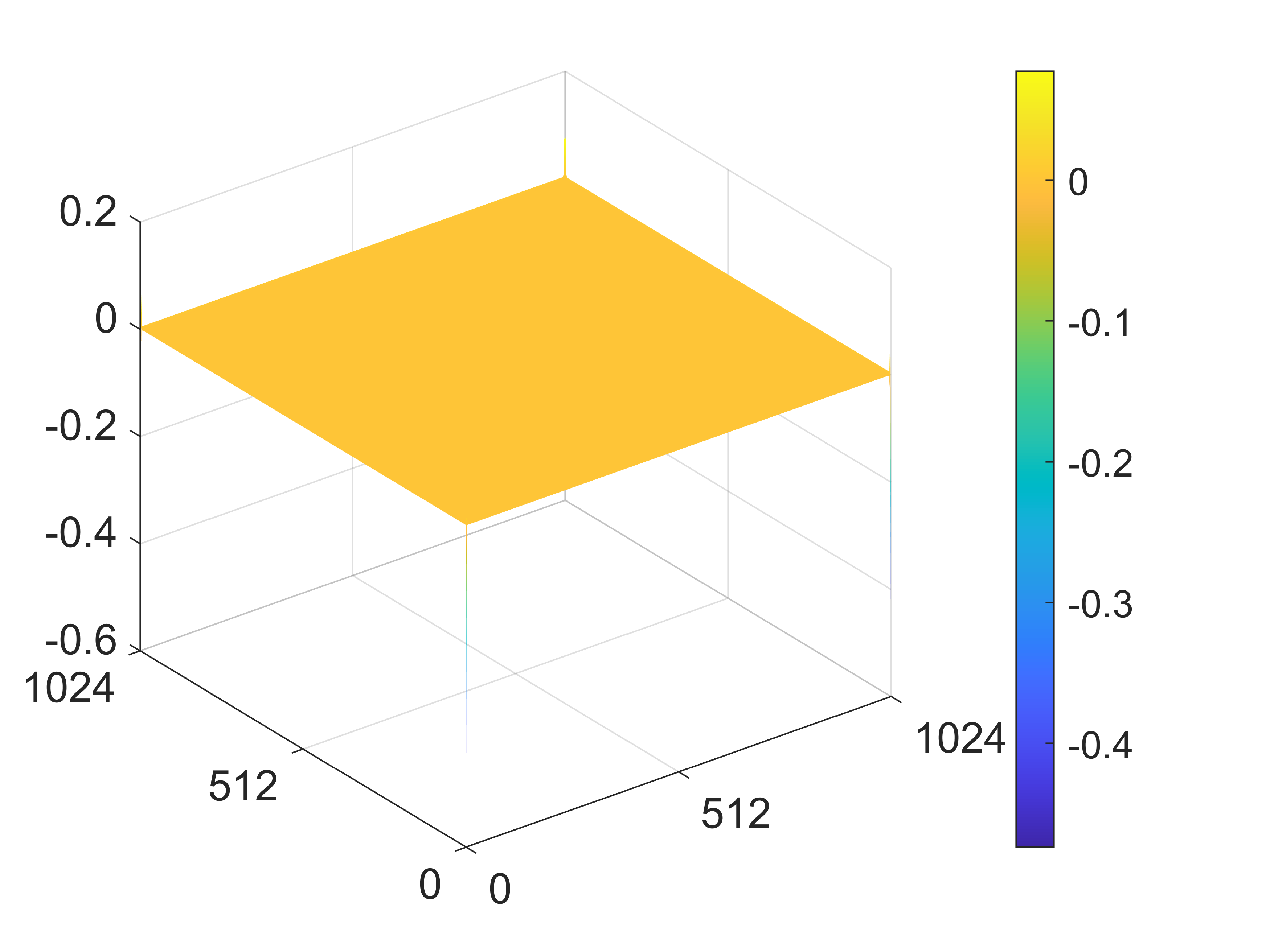}
    \end{subfigure}
     \vspace{0.1cm} 
    \begin{subfigure}[b]{0.32\textwidth}
        \includegraphics[width=\textwidth]{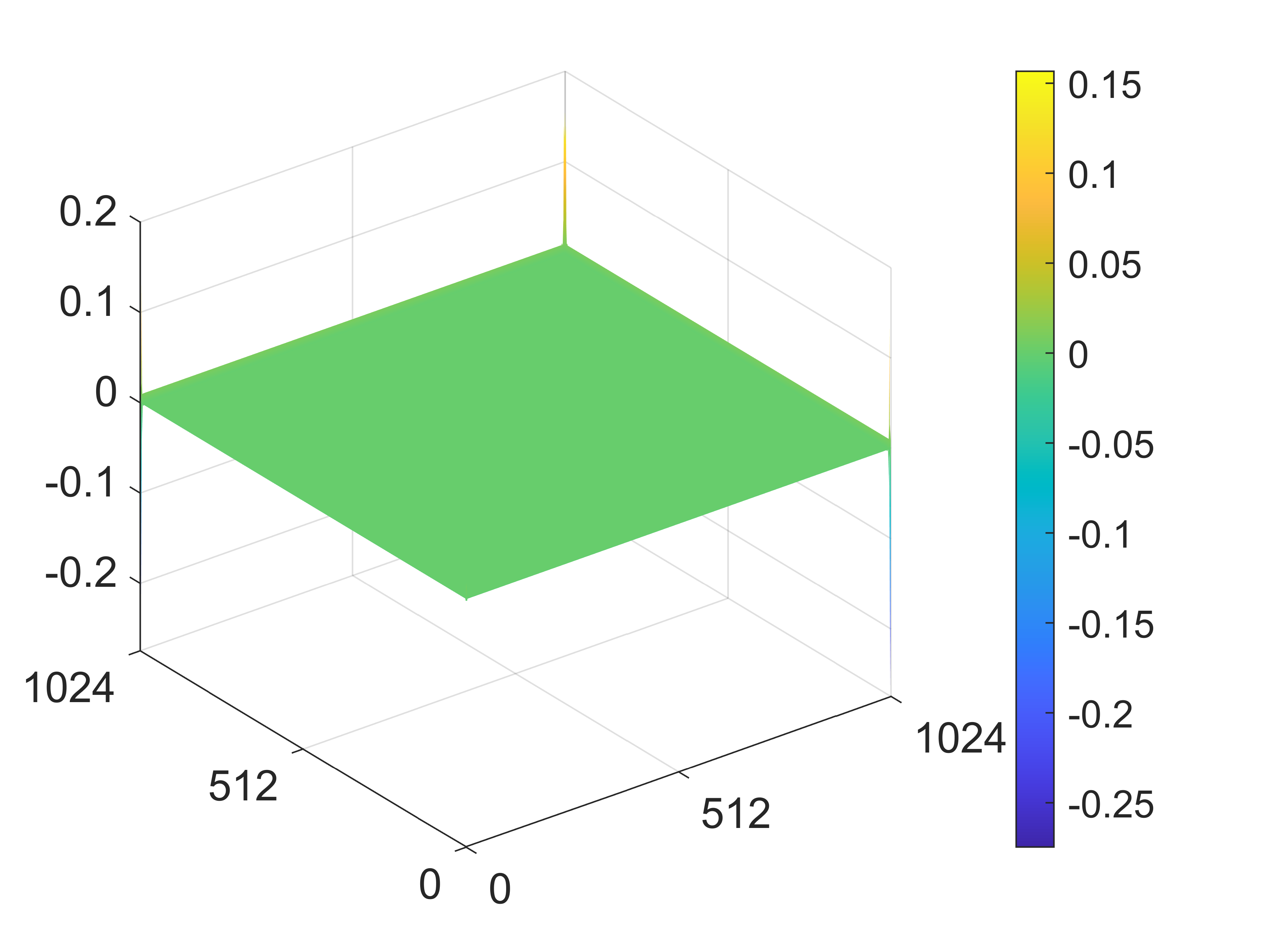}
    \end{subfigure}
    \hspace{-0.5cm}
    \begin{subfigure}[b]{0.32\textwidth}
        \includegraphics[width=\textwidth]{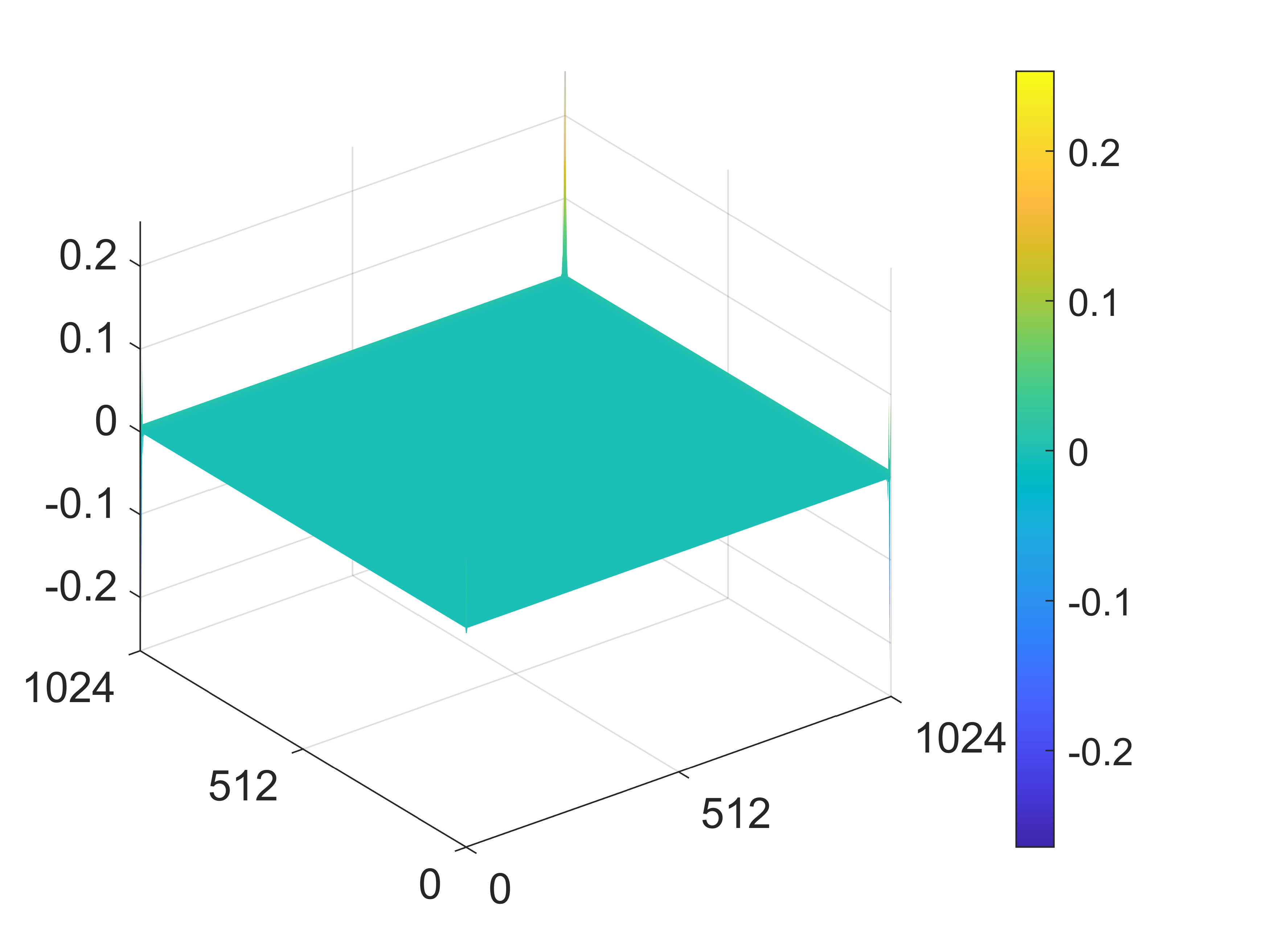}
    \end{subfigure}
    \hspace{-0.5cm}
    \begin{subfigure}[b]{0.32\textwidth}
        \includegraphics[width=\textwidth]{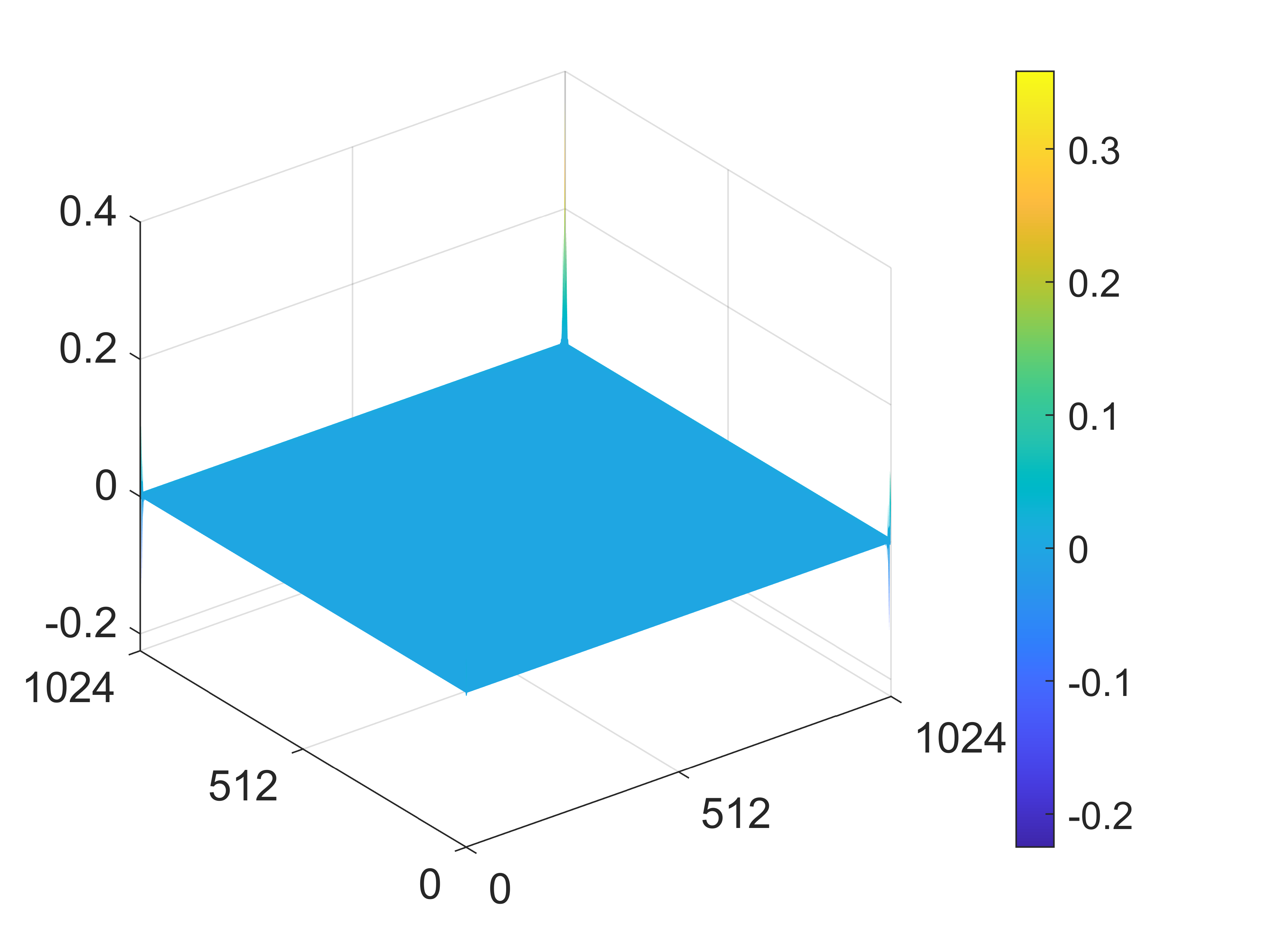}
    \end{subfigure}
    \caption{The POD modes, evaluated at the full-order system dimension of $N=1024^2$.}
    \label{POD mode}
\end{figure}
\begin{table}[htbp]
\centering
\caption{The average relative error and a comparison of CPU time between the MATLAB lyap solver (Reference method) and the OpInf, evaluated at the full-order system dimension of $N=1024^2$ over $10^4$ uniformly distributed random test parameters.}
\label{tab:comparison1}

\begin{tabular}{cccc}
\toprule[1.5pt]
\multirow{2}{*}{\textbf{Strategies}} & 
\multicolumn{2}{c}{\textbf{The OpInf method}} & 
\multirow{2}{*}{\textbf{Reference method}} \\
\cmidrule{2-3} 
& \textbf{\(r = 4\)} & \textbf{\(r = 8\)} & \\ 
\midrule 
\(er_{avg}\) & \(6.77\times 10^{-5}\) & \(3.96\times 10^{-8}\)\\
\midrule
\(T_{\text{off}}\) & 122.44\,s & 122.44\,s & - \\
\midrule
\(T_{\text{on}}\) & 2.04\,s & 2.08\,s & - \\
\midrule
\(T_{\text{tot}}\) & 124.48\,s & 124.52\,s & \(7.01 \times 10^{3}\,s\) \\
\midrule
\(T_{\text{avg}}\)  & \(2.04 \times 10^{-4}\,s\) & \(2.08 \times 10^{-4}\,s\) & \(7.01 \times 10^{-1}\,s\) \\
\bottomrule[1.5pt] 
\end{tabular}
\end{table}
\begin{table}[htbp]
\centering
\caption{The average relative error and a comparison of CPU time between the MATLAB dlyap solver (Reference method) and the OpInf, evaluated at the full-order system dimension of $N=512^2$ over $10^4$ uniformly distributed random test parameters.}
\label{tab:comparison}
\begin{tabular}{cccc}
\toprule[1.5pt]
\multirow{2}{*}{\textbf{Strategies}} & 
\multicolumn{2}{c}{\textbf{The OpInf method}} & 
\multirow{2}{*}{\textbf{Reference method}} \\
\cmidrule{2-3} 
& \textbf{\(r = 3\)} & \textbf{\(r = 6\)} & \\ 
\midrule 
\(er_{avg}\) & \(1.33\times 10^{-3}\) & \(3.04\times 10^{-5}\)\\
\midrule
\(T_{\text{off}}\) & 22.46\,s & 22.46\,s & - \\
\midrule
\(T_{\text{on}}\) & 0.59\,s & 0.79\,s & - \\
\midrule
\(T_{\text{tot}}\) & 23.05\,s & 23.25\,s & \(2.64 \times 10^{3}\,s\) \\
\midrule
\(T_{\text{avg}}\)  & \(0.59 \times 10^{-4}\,s\) & \(0.79 \times 10^{-4}\,s\) & \(2.64 \times 10^{-1}\,s\) \\
\bottomrule[1.5pt] 
\end{tabular}
\end{table}
\begin{figure}
    \centering
    \begin{subfigure}{0.43\textwidth} 
        \includegraphics[width=\linewidth]{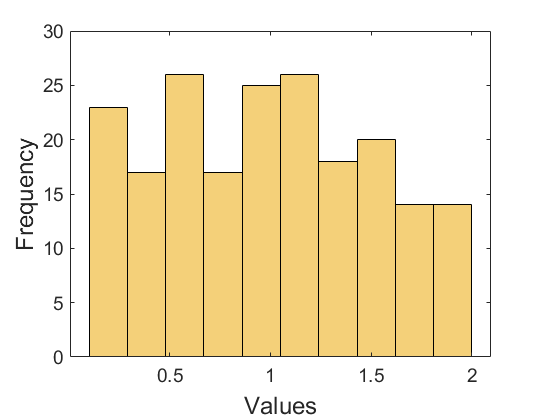}
        \label{rng(2)_1}
    \end{subfigure}
    \hspace{-0.5cm}
    \begin{subfigure}{0.43\textwidth} 
        \includegraphics[width=\linewidth]{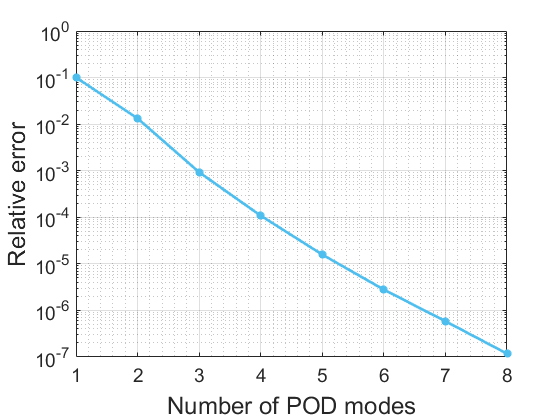}
        \label{rng(2)_2.png}
    \end{subfigure}
    \caption{Distribution of the 200 training parameters (left) and the average relative error of the model trained on them, evaluated at the full-order system dimension of $N=512^2$ over $1000$ uniformly distributed random test parameters (right). }
    \label{figure2}
\end{figure}
\begin{figure}
    \centering
    \begin{subfigure}{0.43\textwidth} 
        \includegraphics[width=\linewidth]{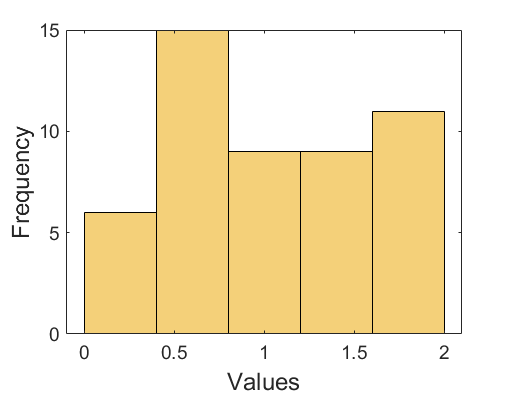}
        \label{rng(5)_1}
    \end{subfigure}
    \hspace{-0.5cm}
    \begin{subfigure}{0.43\textwidth} 
        \includegraphics[width=\linewidth]{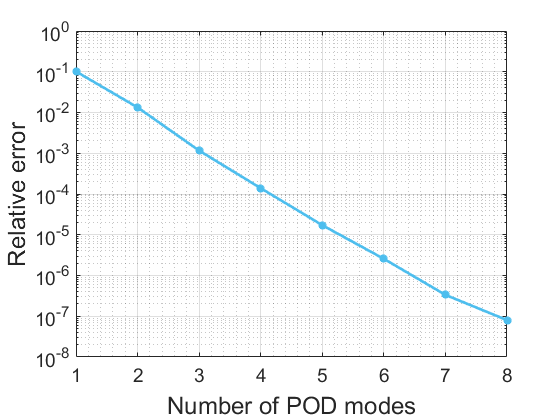}
        \label{rng(5)_2}
    \end{subfigure}
    \caption{Distribution of the 50 training parameters (left) and the average relative error of the model trained on them, evaluated at the full-order system dimension of $N=512^2$ over $1000$ uniformly distributed random test parameters (right). }
    \label{figure3}
\end{figure}
\subsection{Discrete-time PALEs}
\label{sec4.2}
The parameter domain is defined as $\mathcal{P}=[2,6]^2\subset\mathbb{R}^2$. We consider the case of $s=1$, in which the linear operator 
$\mathcal{L}_1$ is defined as 
$$
\mathcal{L}_1(X;\mu)=A^\top(\mu)X(\mu)A(\mu)-X(\mu).
$$
In this setting, equation $\eqref{1}$ reduces to a discrete-time PALE,
where
$$A(\mu)=\mu_1A_1+\frac{1}{\mu_1}A_2+\mu_2A_3,\ M(\mu)=\mu_1M_1+M_2,
$$
$$
A_1=
\begin{bmatrix}
15 \\
& \ddots \\
& & 15 \\
\end{bmatrix},\
M_1=
\begin{bmatrix}
0.1 & \cdots & 0.1 & -0.9
\end{bmatrix},\
M_2=
\begin{bmatrix}
0  & \cdots & 0 & 1
\end{bmatrix},
$$
$$
A_2=
\begin{bmatrix}
0 & 1 \\
0.5 & 0 & 1\\
& \ddots & \ddots & \ddots\\
& & 0.5 & 0 & 1\\
& & & 0.5 & 0
\end{bmatrix},\
A_3=
\begin{bmatrix}
0 & 0 & 1 \\
0 & 0 & 0 & 1\\
0.5 & 0 & 0 & 0 & 1\\
& \ddots & \ddots & \ddots & \ddots & \ddots\\
& & 0.5 & 0 & 0 & 0 & 1\\
& & & 0.5 & 0 & 0 & 0\\
& & & & 0.5 & 0 & 0
\end{bmatrix}.\ 
$$
\begin{figure}
    \centering
    \begin{subfigure}{0.43\textwidth} 
        \includegraphics[width=\linewidth]{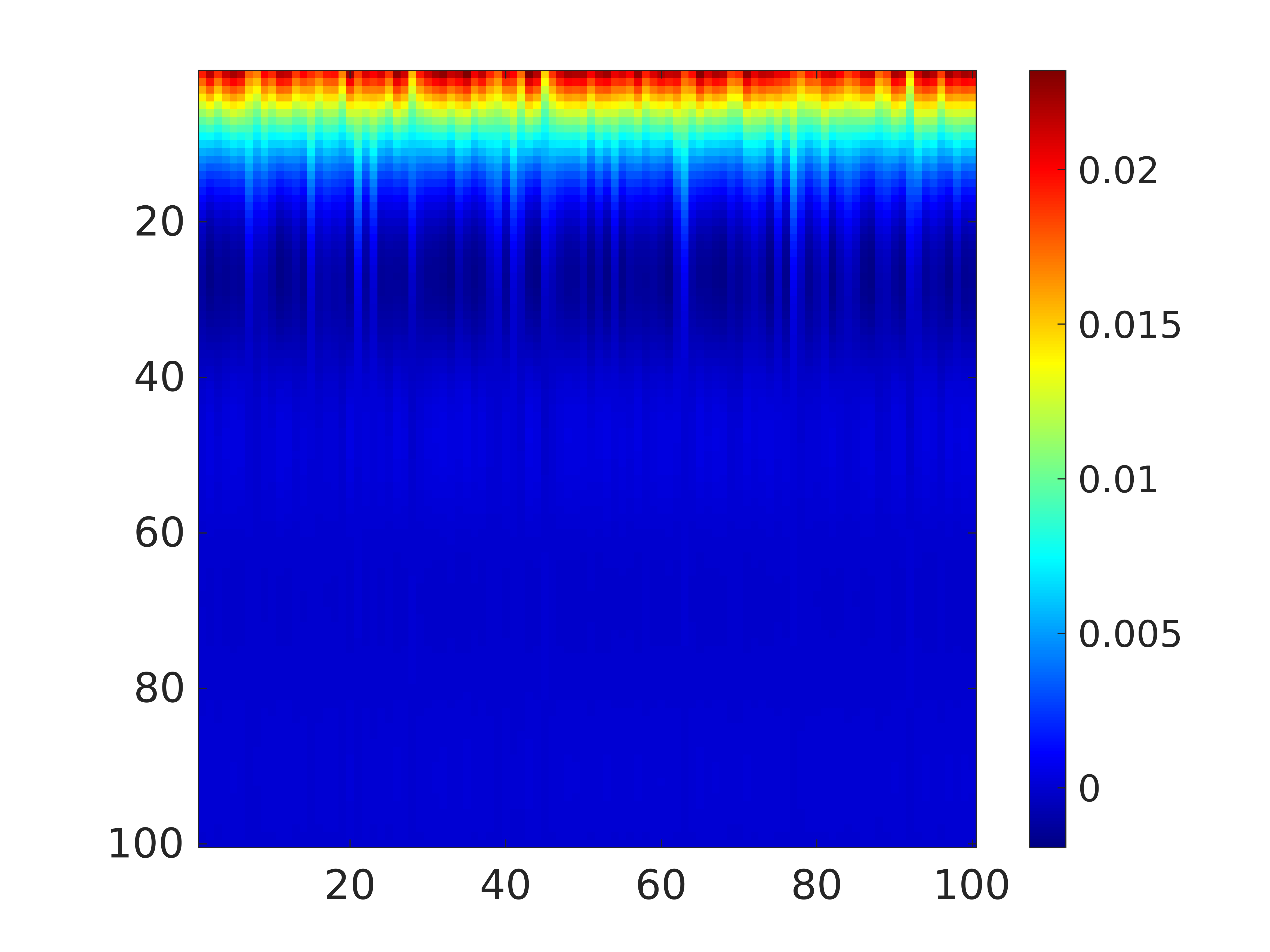}
        \caption{Projection}
        \label{Projection}
    \end{subfigure}
    \hspace{-0.5cm}
    \begin{subfigure}{0.43\textwidth} 
        \includegraphics[width=\linewidth]{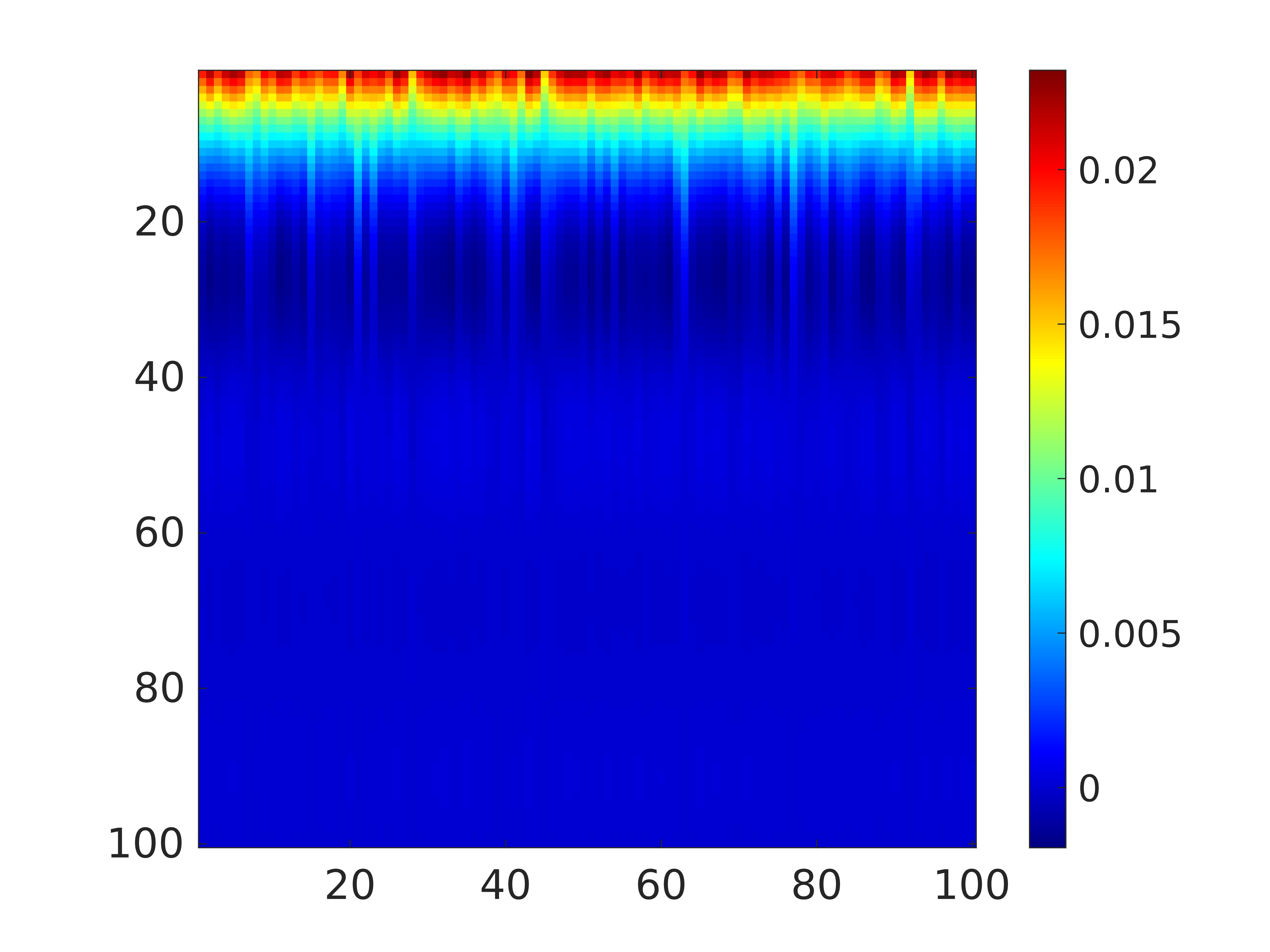}
        \caption{OpInf}
        \label{OpInf}
    \end{subfigure}
    \caption{Full-order solution projected onto the 6-mode POD basis (Left) and the reduced-order solution derived via OpInf on the same basis (Right), evaluated at the full-order system dimension of $N=512^2$ over $10^4$ uniformly distributed random test parameters. }
    \label{Project-OpInf}
\end{figure}

To evaluate the performance of the ROM, we set $r = 6$ and analyze the results from various angles. \Cref{Project-OpInf} provides a visual comparison between the projection of the full-order solution and the reduced-order solution reconstructed via the OpInf method on the same basis. The visual results show that both are highly consistent in morphology and numerical values.

\Cref{figure5} shows the distributions of different-sized training parameters, where each larger set encompasses all parameters from the smaller ones (i.e., the 80-parameter set includes the 40-parameter set, and the 200-parameter set includes the 80-parameter set), and the relative error performance of the POD method with varying numbers of training parameters when $n=512$. 
\begin{figure}
    \centering
    \begin{subfigure}[b]{0.43\textwidth}
        \centering
        \includegraphics[width=\linewidth]{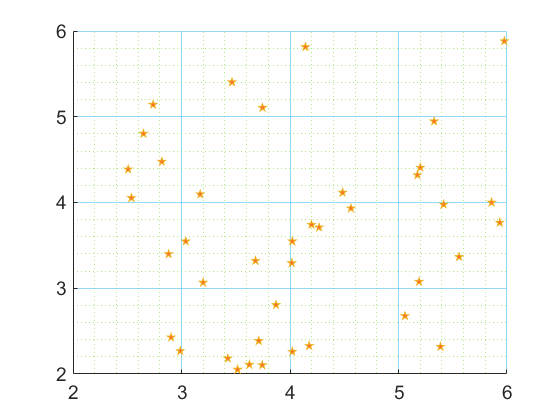}
        \caption{40 training parameters}
    \end{subfigure}
    \hspace{-0.5cm}
    \begin{subfigure}[b]{0.43\textwidth}
        \centering
        \includegraphics[width=\linewidth]{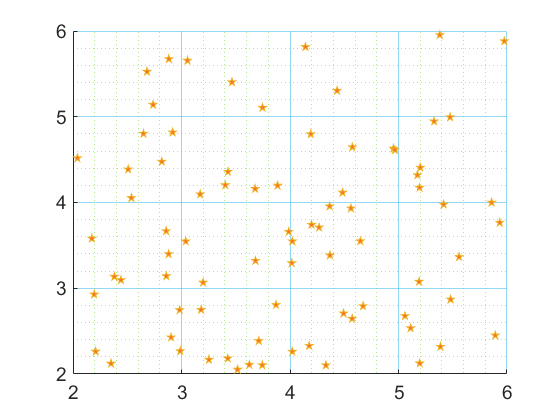}
        \caption{80 training parameters}
    \end{subfigure}
    
    \vspace{0.1cm} 
    
    \begin{subfigure}{0.43\textwidth}
        \centering
        \includegraphics[width=\linewidth]{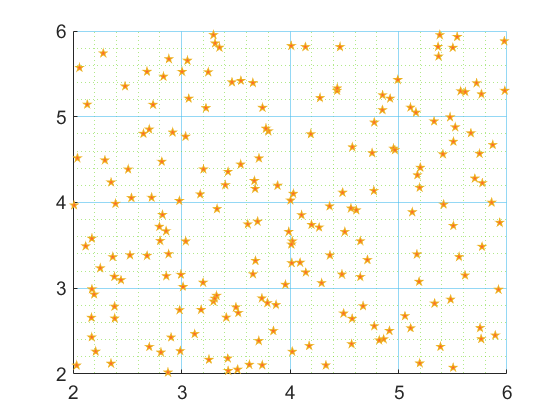}
        \caption{200 training parameters}
    \end{subfigure}
    \hspace{-0.5cm}
    \begin{subfigure}{0.43\textwidth}
        \centering
        \includegraphics[width=\linewidth]{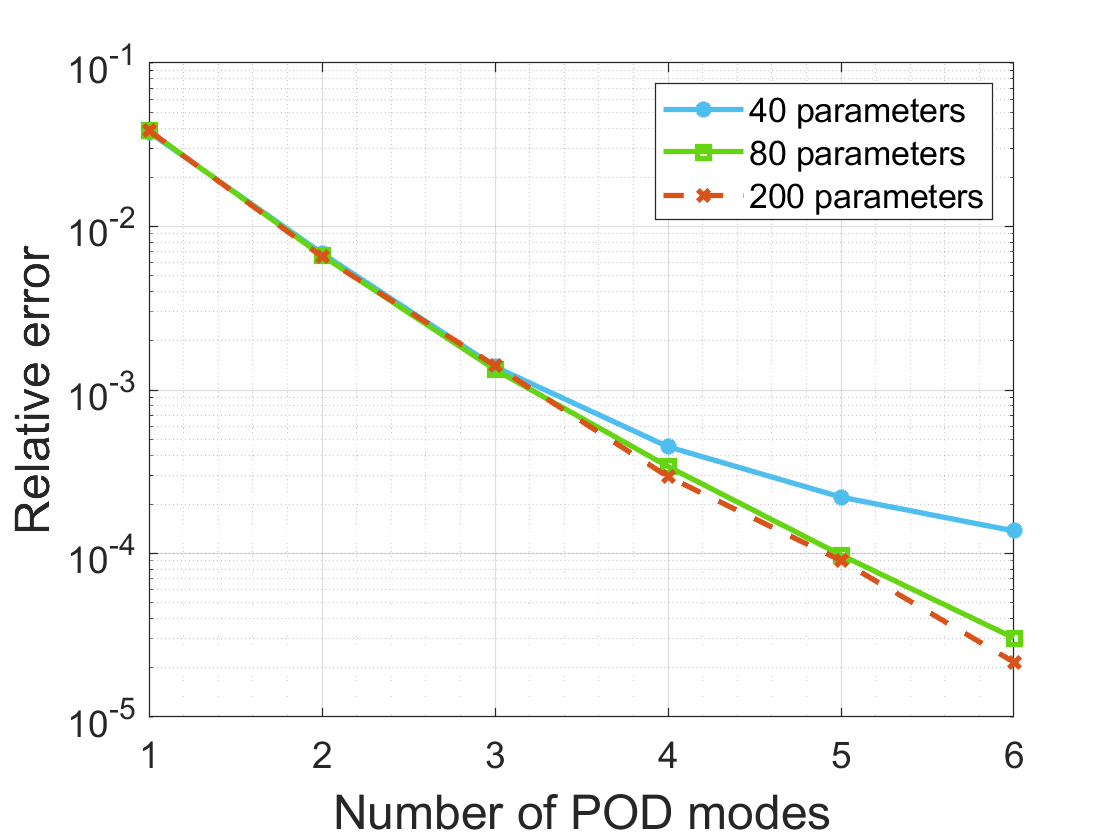}
        \caption{Average relative error}
    \end{subfigure}
    \caption{ Training parameters distributions and the average relative error of the model trained on them, evaluated at the full-order system dimension of $N=512^2$ over $10^4$ uniformly distributed random test parameters.}
    \label{figure5}
\end{figure}
Finally, \Cref{tab:comparison} provides average relative errors with different-sized POD modes and the computational time between the full-order reference method and the OpInf method, clearly demonstrating the high efficiency of the OpInf method in handling large-scale parameter samples. 

\subsection{Continuous-time coupled PALEs}
\label{sec4.3}
The parameter domain is defined as $\mathcal{P}=[1,2]$. We consider the case of $s=2$, in which the linear operators 
are defined as 
$$
\mathcal{L}_1(X_1,X_2;\mu)=A_1^\top(\mu)X_1(\mu)+X_1(\mu)A_1(\mu)-X_1(\mu)+X_2(\mu),
$$
$$
\mathcal{L}_2(X_1,X_2;\mu)=A_2^\top(\mu)X_2(\mu)+X_2(\mu)A_2(\mu)+X_1(\mu)-X_2(\mu).
$$
In this setting, equation $\eqref{1}$ reduces to two continuous-time coupled PALEs,
where
$$A_1(\mu)=\frac{1}{\mu}A_{11}+\frac{1}{\mu^2}A_{12},\ A_2(\mu)=\mu A_{21}+\mu^2A_{22},
$$
$$
M_1(\mu)=\mu M_{11}+M_{12},\ M_2(\mu)=\frac{1}{\mu}M_{21}+M_{22},
$$
$$
M_{11}=
\begin{bmatrix}
    1 & 0 & \cdots & 0
\end{bmatrix},\
M_{12}=
\begin{bmatrix}
    0 & 0 & \cdots & 1
\end{bmatrix},\ M_{21}=2M_{11},\ M_{22}=2M_{12},
$$
$$
A_{11}=
\begin{bmatrix}
    10 \\
    & \ddots\\
    & & 10
\end{bmatrix},\
A_{21}=
\begin{bmatrix}
    8 \\
    & \ddots\\
    & & 8
\end{bmatrix},\
$$
$$
A_{12}=
\begin{bmatrix}
    0 & 2 \\
    3 & 0 & 2\\
    & \ddots & \ddots & \ddots\\
    & & 3 & 0 & 2\\
    & & & 3 & 0
\end{bmatrix},\
A_{22}=
\begin{bmatrix}
    0 & 1 \\
    2 & 0 & 1\\
    & \ddots & \ddots & \ddots\\
    & & 2 & 0 & 1\\
    & & & 2 & 0
\end{bmatrix}.
$$

A comparison of the non-intrusive OpInf method and the intrusive approach is performed using $8$ POD modes, assessing the average relative error over $10$ random test parameters generated in MATLAB. As shown in \Cref{couple1}, the accuracy of OpInf is comparable to that of the intrusive reduced-order modeling method. Furthermore, \Cref{couple2} illustrates the decay of the residual energy with respect to the number of POD modes, as well as the selection of the regularization parameter.
\begin{figure}
    \centering
    \begin{subfigure}{0.43\textwidth} 
        \includegraphics[width=\linewidth]{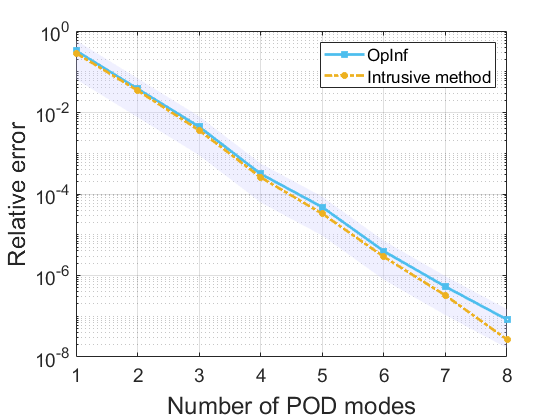}
        \caption{training parameters}
        \label{128couple_train}
    \end{subfigure}
    \hspace{-0.5cm}
    \begin{subfigure}{0.43\textwidth} 
        \includegraphics[width=\linewidth]{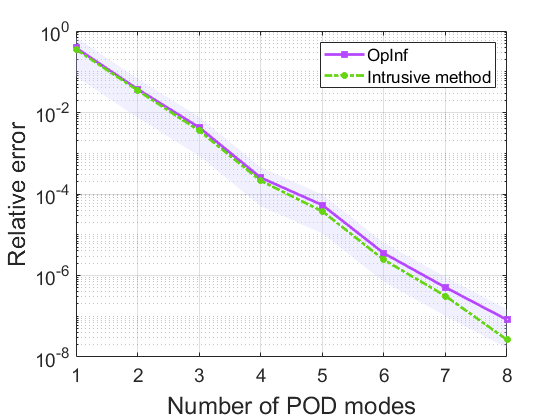}
        \caption{test parameters}
        \label{128couple_test}
    \end{subfigure}
    \caption{Average relative error for OpInf and Intrusive method, evaluated at the full-order system dimension of $N=2\times 128^2$.}
    \label{couple1}
\end{figure}
\begin{figure}
    \centering
    \begin{subfigure}{0.43\textwidth} 
        \includegraphics[width=\linewidth]{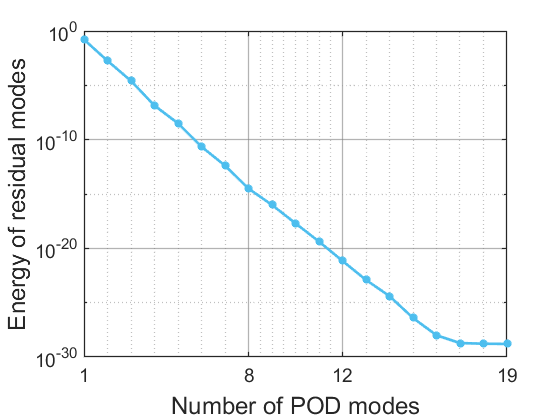}
    \end{subfigure}
    \hspace{-0.5cm}
    \begin{subfigure}{0.43\textwidth} 
        \includegraphics[width=\linewidth]{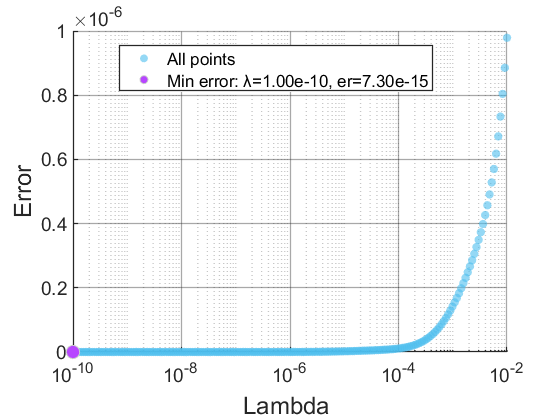}
    \end{subfigure}
    \caption{Residual energy decay (left) and the regularization parameter (right).}
    \label{couple2}
\end{figure}
\subsection{Continuous-time PAREs}
\label{sec4.4}For the parametric Riccati equations in our non-intrusive framework, we cannot provide the stabilizing initial guess that Newton's method typically requires to converge to the unique symmetric positive semidefinite solution. We therefore assume $A(\mu)$ is stable over $\mathcal{P}$, which justifies a zero initial guess and ensures convergence to the correct solution in reduced-order models.

The parameter domain is defined as $\mathcal{P}=[0.1,5]$. We consider the case of $s=1$, in which the nonlinear operator 
$\mathcal{L}_1$ is defined as 
$$
\mathcal{L}_1(X;\mu)=A^\top(\mu)X(\mu)+X(\mu)A(\mu)-X(\mu)G(\mu)X(\mu).
$$
In this setting, equation $\eqref{1}$ reduces to a continuous-time PARE,
where $G(\mu)=B(\mu)B^\top(\mu),$ 
$$
A(\mu)=\frac{1}{\mu}A_1+\frac{1}{\mu^2}A_2,\ B(\mu)=\mu B_1,\ M(\mu)=\frac{1}{\mu}M_1,
$$
$$
A_1=
\begin{bmatrix}
    -30 & -3\\
    2 & -30 & -3\\
    & \ddots & \ddots & \ddots\\
    & & 2 & -30 & -3\\
    & & & 2 & -30
\end{bmatrix},\
A_2=
\begin{bmatrix}
    0 & -3 & -4\\
    2 & 0 & -3 & -4\\
    2 & 2 & 0 & -3 & -4\\
    & \ddots & \ddots & \ddots & \ddots & \ddots\\
    & & 2 & 2 & 0 & -3 & -4\\
    & & & 2 & 2 & 0 & -3 \\
    & & & & 2 & 2 & 0
\end{bmatrix},
$$
$$
 B_1=
\begin{bmatrix}
    0.2 & \cdots & 0.2
\end{bmatrix}^T,\ 
M_1=
\begin{bmatrix}
    0.1 & \cdots & 0.1
\end{bmatrix}.
$$
\begin{figure}
    \centering
    \begin{subfigure}{0.43\textwidth} 
        \includegraphics[width=\linewidth]{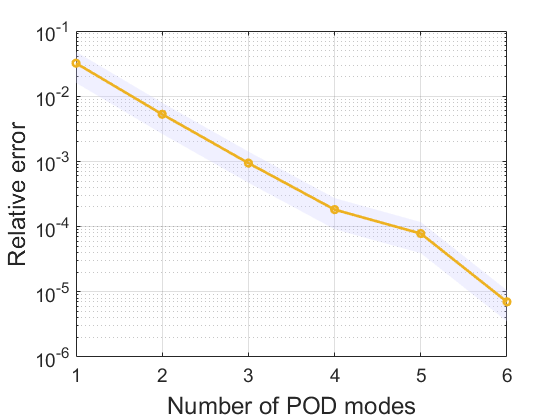}
        \caption{training parameters}
        \label{512_train}
    \end{subfigure}
    \hspace{-0.5cm}
    \begin{subfigure}{0.43\textwidth} 
        \includegraphics[width=\linewidth]{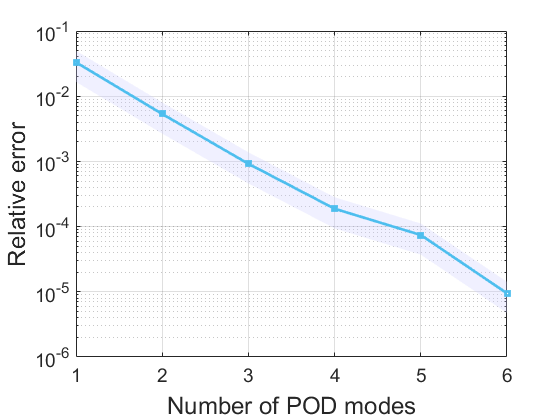}
        \caption{test parameters}
        \label{512_test}
    \end{subfigure}
    \caption{The average relative error, evaluated at the full-order system dimension of $N=512^2$ over $10^4$ uniformly distributed random test parameters.}
    \label{Riccati2}
\end{figure}

We construct the snapshot matrix and select the first 6 POD modes. Due to the nonlinear nature of the matrix equation, the quadratic term has a dimension of $n^2(n^2+1)/2$. Moreover, consolidating the coefficients of redundant terms in the quadratic operator involves complicated intermediate transformations. In this context, the advantages of the non-intrusive approach are clearly demonstrated. 

We generate $10^4$ uniformly distributed test parameters. As shown in \Cref{512_train} and \Cref{512_test}, OpInf maintains good accuracy with a low average relative error for $n=512$. We also provide average relative errors with different-sized POD modes and the CPU time required to compute the solutions using both OpInf and the MATLAB care solver. The results summarized in \Cref{tab:comparison3} show that OpInf is computationally more efficient than the conventional method. 
\Cref{10 test} presents a comparative analysis between solutions obtained using the OpInf and MATLAB care solver over 10 randomly chosen parameter instances. The close agreement between the two approaches validates the reliability and effectiveness of the method in practical settings.
\begin{table}
\centering
\caption{The average relative error and a comparison of CPU time between the MATLAB care solver (Reference method) and the OpInf, evaluated at the full-order system dimension of $N=512^2$ over $10^4$ uniformly distributed random test parameters.}
\label{tab:comparison3}

\begin{tabular}{cccc}
\toprule[1.5pt]
\multirow{2}{*}{\textbf{Strategies}} & 
\multicolumn{2}{c}{\textbf{The OpInf method}} & 
\multirow{2}{*}{\textbf{Reference method}} \\
\cmidrule{2-3} 
& \textbf{\(r = 4\)} & \textbf{\(r = 6\)} & \\ 
\midrule 

\(er_{avg}\) & \(1.88\times 10^{-4}\) & \(9.48\times 10^{-6}\)\\
\midrule
\(T_{\text{off}}\) & 512.68\,s & 512.68\,s & - \\
\midrule
\(T_{\text{on}}\) & 1.16\,s & 1.36\,s & - \\
\midrule
\(T_{\text{tot}}\) & 513.84\,s & 514.04\,s & \(1.03 \times 10^{5}\,s\) \\
\midrule
\(T_{\text{avg}}\)  & \(1.16\times 10^{-3}\,s\) & \(1.36\times 10^{-3}\,s\) & \(1.03 \times 10^{1}\,s\)\\
\bottomrule[1.5pt] 
\end{tabular}
\end{table}
\begin{figure}
     \centering    \includegraphics[width=0.65\linewidth]{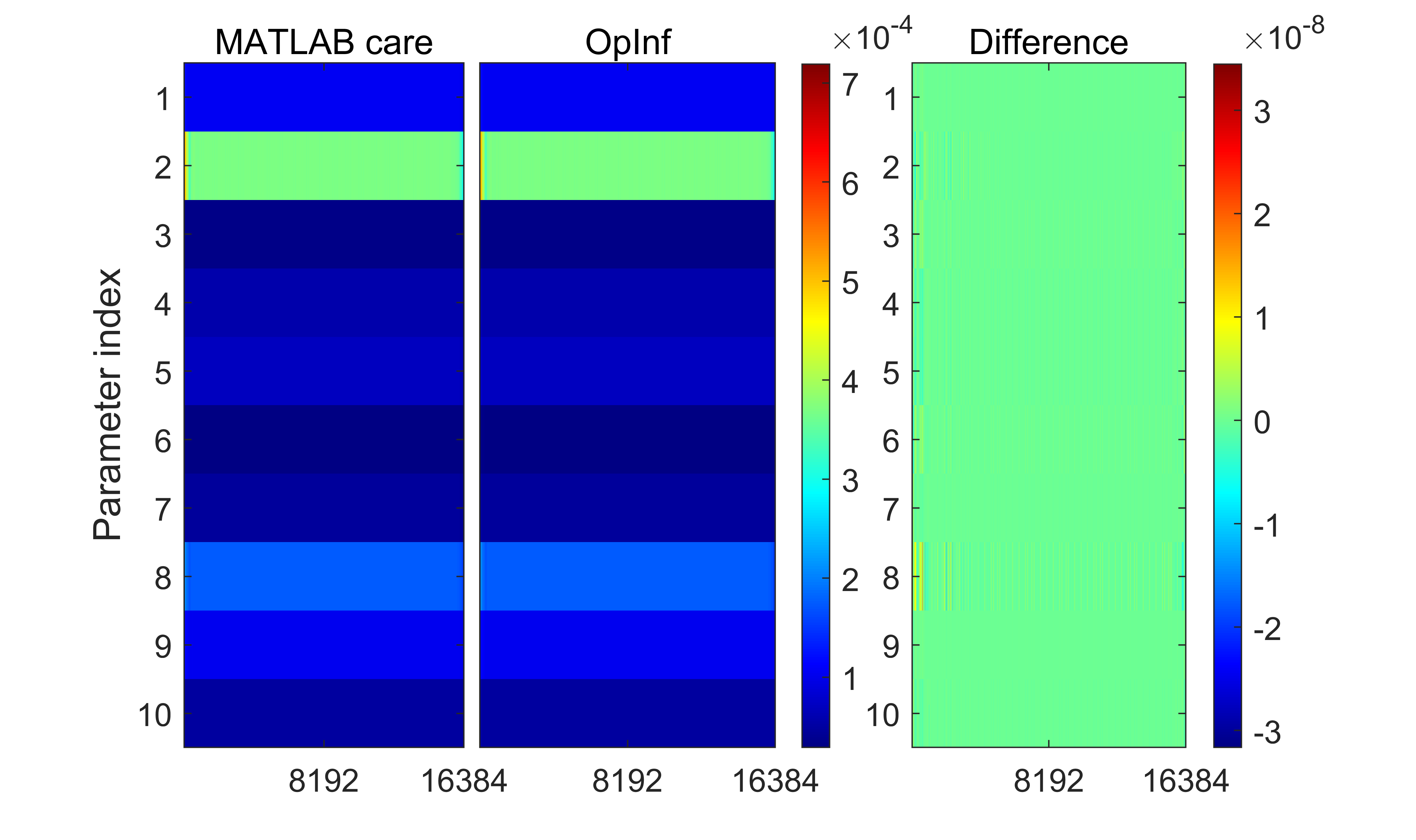}
     \caption{The solutions between the OpInf method and MATLAB care solver, evaluated at the full-order system dimension of $N=128^2$ over $10$ random test parameters.}
     \label{10 test}
\end{figure}
\section{Summary}
\label{sec5}
In this study, we employ a data-driven OpInf approach to establish a unified and efficient surrogate modeling framework for the rapid solution of parametric matrix equations. This framework is applicable to linear, nonlinear, and coupled systems of matrix equations. It requires neither access to the full-order constant operators nor knowledge of intermediate transformation procedures. Numerical experiments demonstrate that OpInf achieves high accuracy, as reflected in the average relative error, and significantly improves computational efficiency compared to conventional solvers, as evidenced by CPU time comparisons. 
However, the quality of the reduced-order model (accuracy and stability) remains highly dependent on the selection of training parameters, namely the solution snapshots. Our current work applies operator inference exclusively to the four classes of matrix equations presented herein. For more complex nonlinear matrix equations, we will explore alternative data-driven model reduction methodologies in future research.
\section*{Acknowledgments}
We would like to acknowledge the assistance of volunteers in putting together this example manuscript and supplement, as well as the computational resources provided by the High Performance Computing Platform of Xiangtan University.

\bibliographystyle{siamplain}
\bibliography{references}
\end{document}